\def\bbR{\mathbb{R}}
\def\r{\rho}
\def\vrad{\mathrm{vrad}}
\def\cK{\mathcal{K}}
\def\E{\mathcal{E}}
\def\o{\omega}
\def\ball{B^n_2}
\def\polar{K^\circ}
\def\cS{\mathcal{S}}
\def\be{\begin{equation}}
\def\ee{\end{equation}}
\def\bea{\begin{eqnarray}}
\def\eea{\end{eqnarray}}
\def\bt{\begin{theorem}}
\def\et{\end{theorem}}
\def\bl{\begin{lemma}}
\def\el{\end{lemma}}
\def\br{\begin{remark}}
\def\er{\end{remark}}
\def\bc{\begin{corollary}}
\def\ec{\end{corollary}}
\def\bd{\begin{definition}}
\def\ed{\end{definition}}
\def\sphere{S^{n-1}}
\def\bp{\begin{proposition}}
\def\ep{\end{proposition}}
\def\bpf{\begin{proof}}
\def\epf{\end{proof}}
\def\dV{\,d \widetilde{V}_K(u)}
\def\affine{\widehat{\Omega}^{orlicz}_{\phi}}
\def\affineg{\widehat{G}^{orlicz}_{\phi}}
\def\affinep{\widehat{\Omega}^{orlicz}_{p}}
\def\affinegp{\widehat{G}^{orlicz}_{p}}
\newtheorem{theorem}{Theorem}[section]
\newtheorem{lemma}{Lemma}[section]
\newtheorem{remark}{Remark}[section]
\newtheorem{proposition}{Proposition}[section]
\newtheorem{corollary}{Corollary}[section]
\newtheorem{definition}{Definition}[section]
\begin{document}
\title{The Orlicz-Petty bodies
\footnote{Keywords:  affine isoperimetric inequalities, affine
surface area,  geominimal surface area,  Orlicz-Brunn-Minkowski
theory, Orlicz mixed volume, Petty body.}}

\author{Baocheng Zhu, Han Hong and Deping Ye }
\date{}
\maketitle
\begin{abstract}  This paper is dedicated to the Orlicz-Petty bodies. We first propose the homogeneous Orlicz affine and geominimal surface areas, and establish their basic properties such as homogeneity, affine invariance and affine isoperimetric inequalities. We also prove that the homogeneous geominimal surface areas are continuous, under certain conditions, on the set of convex bodies in terms of the Hausdorff distance. Our proofs rely on the existence of the Orlicz-Petty bodies and the uniform boundedness of the Orlicz-Petty bodies of a convergent sequence of convex bodies. Similar results for the nonhomogeneous Orlicz geominimal surface areas are proved as well.

\vskip 2mm 2010 Mathematics Subject Classification: 52A20, 53A15.
 \end{abstract}\section{Introduction}

The theory of convex geometry was greatly enriched by the
combination of two notions: the volume and the linear Orlicz addition
of convex bodies \cite{Gardner2014, XJL}.  This new theory, usually called the
Orlicz-Brunn-Minkowski theory for convex bodies, started from the
works of Lutwak, Yang and Zhang \cite{LYZ2010a, LYZ2010b}, and
received considerable attention (see e.g., \cite{bor2013-1,
bor2012, bor2013-2, Chen2011, HaberlLYZ,  HabP, Ye2012, Ye2013, Zhu2012,
ZouXiong2014}). The linear Orlicz addition of convex
bodies was proposed by Gardner, Hug and Weil \cite{Gardner2014}
(independently Xi, Jin and Leng \cite{XJL}). Let $\varphi_i: [0,
\infty)\rightarrow [0, \infty)$, $i=1, 2$,  be  convex functions
such that $\varphi_i$ is strictly increasing with $\varphi_i(1)=1$,
$\varphi_i(0)=0$ and $\lim_{t\rightarrow \infty} \varphi_i(t)=\infty$. Let $S^{n-1}$ be
the unit sphere in $\bbR^n$ and $h_K: S^{n-1}\rightarrow (0,
\infty)$ denote the support function of $K$. For any given
$\varepsilon>0$ and two convex bodies $K$ and $L$ with the origin in
their interiors, the linear Orlicz addition $K+_{\varphi,
\varepsilon}L$ is determined by its support function
$h_{K+_{\varphi, \varepsilon}L}$, the unique solution of
\begin{equation*}
\varphi_1\Big(\frac{h_K(u)}{\lambda}\Big)+\varepsilon\varphi_2\Big(\frac{h_L(u)}{\lambda}\Big)=1\
\ \ \mbox{for} \ \ u\in S^{n-1}.\end{equation*}  Denote by
$|K+_{\varphi, \varepsilon}L|$ the volume of $K+_{\varphi,
\varepsilon}L$. If $(\varphi_1)'_l(1)$, the left derivative of $\varphi_1$ at $t=1$, exists and is positive, then  
\begin{equation}\label{geometricinterpretation-mixedvolume}\frac{(\varphi_1)'_l(1)}{n}
\cdot  \frac{\,d }{\,d\varepsilon}|K+_{\varphi, \varepsilon}L|
\bigg|_{\varepsilon=0^+}= \frac{1}{n} \int_{S^{n-1}}
\varphi_2\bigg(\frac{h_L(u)}{h_K(u)}\bigg)h_K(u)dS_K(u),\end{equation}
where  $S_K$ is the surface area measure of $K$ (see \cite{Gardner2014, XJL} for more details).  That is, formula
(\ref{geometricinterpretation-mixedvolume}) provides a geometric
interpretation of $V_{\phi}(K, L)$ for $\phi$ being convex and
strictly increasing. Here, for any continuous function $\phi: (0,
\infty)\rightarrow (0, \infty)$, $V_{\phi}(K, L)$ denotes the nonhomogeneous Orlicz
$L_{\phi}$ mixed volume of $K$ and $L$:
\begin{equation}\label{definition-mixed-volume-2016-07-11}
V_{\phi}(K, L)= \frac{1}{n} \int_{S^{n-1}}
\phi\bigg(\frac{h_L(u)}{h_K(u)}\bigg)h_K(u)dS_K(u).\end{equation}
To the best of our knowledge, there are no  geometric interpretations of
$V_{\phi}(K, L)$ for non-convex functions $\phi$ (even for $\phi(t)=t^p$ with $p<1$) in literature; and such geometric interpretations will be provided
in Subsection \ref{section-geom-inter} in this paper.  Note that
formula (\ref{geometricinterpretation-mixedvolume}) is essential for
the Orlicz-Minkowski inequality and many other objects, such as the
Orlicz affine and geominimal surface areas \cite{Ye2015b}.

Introduced by Blaschke in 1923 \cite{Bl1}, the classical affine
surface area was thought
 to be one of the core concepts in
the Brunn-Minkowski theory of convex bodies due to its important
applications in, such as, approximation of convex bodies by
polytopes \cite{Gr2, LSW, SW5} and valuation theory \cite{A1, A2,
LudR}. Since the groundbreaking paper by Lutwak \cite{Lu1},
considerable progress has been made on the theory of the $L_p$ affine
surface areas (see e.g., \cite{Jenkinson2012, LR1,  MW1, MW2,
Paouris2010, SW4, Werner2012a, Werner2012b, WY2008}). Like the
classical affine surface area, the $L_p$ affine surface areas play
fundamental roles in applications and provide powerful tools in
convex geometry. Note that the $L_p$ affine surface areas are affine
invariant valuations with homogeneity.

In the Orlicz-Brunn-Minkowski theory for convex bodies, a central
task is to find the ``right" definitions for the Orlicz affine
surface areas. Here, we will discuss two different approaches by
Ludwig  \cite{Ludwig2009} and the third author \cite{Ye2015b}.  Based on an integral formula, Ludwig proposed the
general affine surface areas \cite{Ludwig2009}. Ludwig's definitions
work perfectly in studying properties such as valuation
\cite{Ludwig2009}, the characterization of valuation \cite{HabP,
Ludwig2009} and the monotonicity under the Steiner symmetrization
\cite{Ye2013}. In order to define the Orlicz geominimal surface
areas, new ideas are needed because geominimal surface areas do not
have convenient integral expression like their affine relatives. The
third author provided a unified approach to define the Orlicz affine
and geominimal surface areas  \cite{Ye2015b} based on the Orlicz
$L_{\phi}$ mixed volume $V_{\phi}(\cdot, \cdot)$ defined in formula
(\ref{definition-mixed-volume-2016-07-11}). In fact, the approach in
\cite{Ye2015b} is related to an optimization problem for the
$f$-divergence \cite{HouYe2016} and could be used to define other
versions of Orlicz affine and geominimal surface areas
\cite{CaglarYe2016, Ye2016a, YeZhuZhou2015}.

Note that the natural property of ``homogeneity"  is missing in the Orlicz 
affine surface areas in \cite{Ludwig2009, Ye2015b}.  To define the
Orlicz affine surface areas with homogeneity is one of the main
objects in this paper; and it will be done in Section
\ref{section-homogeneous-geom}.  As an example, we give the
definition for $\phi\in \widehat{\Phi}_1$, where $\widehat{\Phi}_1$
is the set of functions $\phi: [0, \infty)\rightarrow [0, \infty)$
such that $\phi$  is strictly increasing with $\phi(0)=0,
\phi(1)=1$, $\lim_{t\rightarrow \infty} \phi(t)=\infty$ and
$\phi(t^{-1/n})$ being strictly convex on $(0, \infty)$. For convex body $K$ and star
body $L$ with the origin in their interiors, define $\widehat{V}_{\phi} (K, L^\circ)$  for $\phi\in
\widehat{\Phi}_1$ by  \begin{equation*}  \widehat{V}_{\phi} (K,
L^\circ)=\inf_{\lambda>0}  \bigg\{ \int _{S^{n-1}}
\phi\Big(\frac{n|K|  }{\lambda \cdot \rho_L(u)\cdot
h_K(u)}\Big)h_K(u)\, dS_K(u)\leq n|K|\bigg\},\end{equation*} where
$\rho_L$ denotes the radial function of $L$.  We
now define $\affine(K)$ for $\phi\in \widehat{\Phi}_1$, the
homogeneous Orlicz $L_{\phi}$ affine surface area of $K$, by the
infimum of $\widehat{V}_{\phi} (K, L^\circ)$ where $L$  runs over
all star bodies with the origin in their interiors and
$|L|=|\ball|$ (the volume of the Euclidean unit ball of $\bbR^n$). In
Proposition \ref{affine-invariance:proposition}, we show that
$\affine(K)$ is invariant under the volume preserving linear maps
and has homogeneous degree $(n-1)$.  Moreover, the following affine
isoperimetric inequality is established in Theorem \ref{homogeneous
Orlicz affine isoperimetric inequality}:  {\em if $K$ has its centroid
at the origin and $\phi\in \widehat{\Phi}_1$, then
\begin{eqnarray*}\frac{\affine(K)}{\affine(\ball)}\ \leq \
\bigg(\frac{|K|}{|\ball|}\bigg)^{\frac{n-1}{n}},  \end{eqnarray*}
with equality if and only if $K$ is an origin-symmetric ellipsoid.}
Note that affine isoperimetric inequalities are fundamental in
convex geometry; and these inequalities compare affine invariant
functionals with the volume  (see e.g., \cite{CG, HaberlFranz2009,
LutwakZhang1997, LYZ1, LYZ2010a, LYZ2010b, WY2008, Zhang2007}).

 The Petty body and its $L_p$ extensions for $p>1$ were used to study the continuity of the classical geominimal surface area and its $L_p$ counterparts \cite{Lu1, Petty1974}. To prove the existence and uniqueness of the Orlicz-Petty bodies is one of the main goals of Section \ref{section-homogeneous-petty} in  this paper. In order to fulfill these goals, we first define $\affineg(K)$, the homogeneous Orlicz geominimal surface area of $K$,  by the infimum of $\widehat{V}_{\phi} (K, L^\circ)$ where $L$  runs over all convex bodies with the origin in their interiors and $|L|=|\ball|$. The classical geominimal surface area, which corresponds to $\phi(t)=t$, was introduced by Petty \cite{Petty1974} in order to study the affine isoperimetric problems \cite{Petty1974, Petty1985}. The classical geominimal surface area and its $L_p$ extensions  (corresponding to $\phi(t)=t^p$) for $p>1$ by Lutwak \cite{Lu1} are continuous on the set of convex bodies in terms of the Hausdorff distance; while their affine relatives are only semicontinuous. The main ingredients to prove the continuity of the $L_p$ geominimal surface area for $p\geq 1$ are the existence of the $L_p$ Petty bodies and the uniform boundedness of the $L_p$ Petty bodies of a convergent sequence of convex bodies (hence, the Blaschke selection theorem can be used).  In Section \ref{section-homogeneous-petty}, we will prove that $\affineg(\cdot)$ is also continuous for $\phi\in \widehat{\Phi}_1$. Note that  $\phi(t)=t^p\in \widehat{\Phi}_1$ if $p\in (0, \infty)$. Consequently, the $L_p$ geominimal surface area for $p\in (0, 1)$, proposed by the third author in \cite{Ye2015a}, is also continuous.  Our approach basically follows the steps in  \cite{Lu1, Petty1974}; however, our proof is more delicate and requires much more careful analysis due to the lack of convexity of $\phi$ (note that in $\widehat{\Phi}_1$, $\phi(t^{-1/n})$ is assumed to be convex, not $\phi$ itself). In particular, we prove the existence and uniqueness of the Orlicz-Petty bodies in Proposition \ref{p3}. Our main result is Theorem \ref{continuity-0717-1}: {\em if $\phi\in\widehat{\Phi}_1$, then the homogeneous $L_\phi$ Orlicz geominimal surface area is continuous  on the set of convex bodies with respect to the Hausdorff distance.} The continuity of nonhomogeneous Orlicz geominimal surface areas \cite{Ye2015b} will be discussed in Subsection \ref{section-nonhomogeneous-petty}. The $L_p$ Petty body for $p\in (-1, 0)$ is more involved and will be discussed in Section \ref{section-symmetric-geom}.


\section{Background and Notation}\label{section 2}

We now introduce the basic well-known facts and standard notations needed in this paper. For more details and more concepts in convex geometry, please see \cite{Gard, Gruber2007,Sch}.

A convex and compact subset $K\subset\bbR^n$ with nonempty interior is called a convex body in $\bbR^n$. By $\cK$ we mean the set of all convex bodies containing the origin and by $\cK_0$ the set of all convex bodies with the origin in their interiors. A convex body $K$ is said to be origin-symmetric if $K=-K$ where $-K=\{x\in \bbR^n: -x\in K\}$. Let $\cK_e$ denote the set of all origin-symmetric convex bodies in $\bbR^n$.  The volume of $K$ is denoted by $|K|$ and the volume radius of $K$ is denoted by $\vrad(K)$. By $\ball$ and $\sphere$, we mean the Euclidean unit ball and the unit sphere in $\bbR^n$ respectively. The volume of $\ball$ will be often written by $\omega_n$ and the natural spherical measure on $\sphere$ is written by $\sigma$.  Consequently, $\vrad(K)=(|K|/\omega_n)^{1/n}$. The standard notation $GL(n)$ stands for the set of all invertible linear transforms on
$\bbR^n$. For $A\in GL(n)$, we use $\det A$ to denote the determinant of
$A$. Let $SL(n)=\{A: \ A\in GL(n)\  \mbox{and} \  \det A=\pm 1\} $. By $A^t$ and $A^{-t}$  we mean the transpose of $A$ and the inverse of $A^t$ respectively.

Each convex body $K\in\mathcal{K}$ has a continuous support function $h_K: \sphere\rightarrow [0, \infty)$ defined by
$h_K(u)=\max_{x\in K} \langle x, u\rangle$ for $u\in \sphere$,  where $\langle \cdot,
\cdot \rangle$ denotes the usual inner product. Note that $h_K$ for $K\in \cK$ is nonnegative on $\sphere$, but it is strictly positive on $\sphere$ if $K\in \cK_0$.  Moreover, one can define a probability measure $\widetilde{V}_K$ on each $K\in \cK$  by $$
\dV=\frac{h_K(u)\,dS_K(u)}{n|K| } \ \ \ \ \mbox{for} \ \ u\in \sphere, $$
where $S_K$ is the surface area measure of $K$.  It is well known that $S_K$ satisfies \begin{eqnarray} \int_{\sphere}u\,d S_K(u)=0 \ \ \mbox{and}\ \ \int_{\sphere} |\langle u, v\rangle|\,dS_K(u)>0 \ \mbox{for each}\ v\in \sphere. \label{minkowski-solution-1} \end{eqnarray} The first formula of (\ref{minkowski-solution-1}) asserts that $S_K$ has its centroid at the origin and the second one states that $S_K$  is not concentrated on any great subsphere. Let $\mu_K$ denote the usual surface area of $\partial K$, the boundary of $K$, and $N_K(x)$ denote a unit outer normal vector of $x\in \partial K$. For each $f\in C(S^{n-1})$, where $C(S^{n-1})$ denotes the set of all continuous functions defined on $\sphere$, one has
$$\int_{\sphere} f(u)\,dS_K(u)=\int _{\partial K}
f(N_K(x))\,d\mu_K(x).$$ The dilation of $K$ is of form $sK=\{sx: x\in K\}$ for $s>0$. Clearly, $h_{sK}(u)=s\cdot h_K(u)$ for all $u\in \sphere$. Moreover, $sK$ and $K$ share the same  probability measure $\dV$. Two convex bodies $K$ and $L$ are said to be dilates of each other if $K=s L$ for some constant $s>0$.

For $u\in \sphere$, let  $l_u=\{tu: t\geq 0\}$ . We say $L\subset \bbR^n$ is star-shaped at the origin if, for each $u\in S^{n-1}$, $L\cap l_u$ is a closed line segment containing the origin. One can define the radial function $\rho_L: \sphere\rightarrow [0, \infty)$ for $L$ a star-shaped set about
the origin by $$\rho_L(u)=\max\{\lambda\geq0:\lambda u\in L\} \  \ \ \ \ \mbox{for} \ \  u\in \sphere. $$ If $\rho_L$ is
positive and continuous on $\sphere$, then $L$ is called a star body about the
origin. Denote by $\cS_0$ the set of star bodies about the origin in $\mathbb{R}^n$ and clearly $\cK_0\subset\cS_0$. The volume of $L\in \cS_0$ can be calculated by \be\label{formula for volume}
  |L|=\frac{1}{n} \int_{S^{n-1}}\rho_L^n(u)\, d\sigma(u)\ \ \ \mathrm{and} \ \ \  |\polar|=\frac{1}{n} \int_{S^{n-1}}\frac{1}{h_K^n(u)}\,d\sigma(u) .
 \ee Hereafter, $\polar\in \cK_0$ is the polar body of $K\in \cK_0$; and the support function $h_{\polar}$ and the radial function $\rho_{\polar}$ are given by $$
h_{K^{\circ}}(u)=\frac{1}{\rho_{K}(u)} \quad\mathrm{and} \quad
\rho_{K^{\circ}}(u)=\frac{1}{h_{K}(u)}, \quad \mathrm{for\ all}\ \  u\in
S^{n-1}.
$$ Alternatively, $\polar$ can be defined by $$K^\circ=
\{x\in\mathbb{R}^{n}: \langle x, y\rangle \leq 1\ \ \mathrm{for\ all}\ y\in
K \}.$$  The bipolar theorem states that $(\polar)^\circ=K$ if $K\in \cK_0$.

Let $\cK_c\subset \cK_0$ be the set of convex bodies with their centroids at origin; that is,  $\int_{K} x\,dx=0$ if $K\in \cK_c$.  We say $K\in \cK_0$ has the Santal\'{o} point at the origin if $\polar\in \cK_c$. Denote by $\cK_s\subset \cK_0$ the set of convex bodies with their Santal\'{o} points at the origin, and let $\widetilde{\cK}= \cK_s\cup \cK_c$. The set $\widetilde{\cK}$ is important in the famous Blaschke-Santal\'{o} inequality: for $K\in \widetilde{\cK}$, one has $$|K|\cdot |\polar|\leq \omega_n^2$$ with equality if and only if $K$ is an origin-symmetric ellipsoid (i.e., $K=A(\ball)$ for some $A\in GL(n)$).

On the set $\cK$, we consider the topology generated by the Hausdorff distance  $d_H(\cdot, \cdot)$. For $K, K'\in \cK$, define $d_H(K, K')$ by $$d_H(K, K')=\|h_K-h_{K'}\|_{\infty} =\sup_{u\in \sphere} |h_K(u)-h_{K'}(u)|.$$ A sequence $\{K_i\}_{i\geq 1}\subset \cK$ is said to be convergent to a convex body $K_0$ if $d_H(K_i, K_0)\rightarrow 0$ as $i\rightarrow \infty$. Note that if $K_i\rightarrow K_0$ in the Hausdorff distance, then $S_{K_i}$ is weakly convergent to $S_{K_0}$. That is,
for all $f\in C(S^{n-1})$, one has $$ \lim_{i\rightarrow\infty} \int_{\sphere} f(u)\,dS_{K_i}(u) = \int_{\sphere} f(u)\,dS_{K_0}(u).$$ We will  use a modified form of the above limit: if  $\{f_i\}_{i\geq 1}\subset C(S^{n-1})$ is uniformly convergent to $f_0\in C(S^{n-1})$ and $\{K_i\}_{i\geq 1}\subset \cK$ converges to  $K_0\in \cK$ in the Hausdorff distance, then   \be\label{weak fact}\lim_{i\rightarrow\infty}\int_{S^{n-1}}f_i(u)\
d S_{K_i}(u)=\int_{S^{n-1}}f_0(u)\ dS_{K_0}(u).  \ee  The Blaschke selection
theorem is a powerful tool in convex geometry (see e.g.,
\cite{Gruber2007, Sch}) and will be often used in this paper. It reads: {\em  every bounded sequence of convex bodies has a subsequence
that converges to a convex body.}

The following result, proved by Lutwak \cite{Lu1}, is essential for our main results.

\bl  \label{l1} Let $\{K_i\}_{i\geq 1}\subset \mathcal{K}_{0}$ be a convergent sequence with limit $K_0$, i.e., $K_i\rightarrow K_0$ in the Hausdorff distance. If the sequence
$\{|K_i^\circ|\}_{i\geq 1}$ is bounded, then $K_0\in\mathcal{K}_{0}$. \el

 \section{The homogeneous Orlicz affine and geominimal surface areas}\label{section-homogeneous-geom}

This section is dedicated to  Orlicz  affine and geominimal  surface areas with homogeneity. Let $\mathcal{I}$ denote the set of continuous functions $\phi: [0, \infty)\rightarrow [0, \infty)$ which are strictly increasing with $\phi(1)=1$, $\phi(0)=0$ and $\phi(\infty)=\lim_{t\rightarrow \infty} \phi(t)=\infty$. Similarly, $\mathcal{D}$ denotes  the set of continuous functions $\phi: (0, \infty)\rightarrow (0, \infty)$ which are strictly decreasing with $\phi(1)=1$,  $\phi(0)=\lim_{t\rightarrow 0} \phi(t)=\infty$ and $\phi(\infty)=\lim_{t\rightarrow \infty} \phi(t)=0$. Note that the conditions on $\phi(0), \phi(1)$  and $\phi(\infty)$ are  mainly for convenience; results may still hold for more general strictly increasing or decreasing functions.

 The Orlicz $L_{\phi}$ mixed volume of convex bodies $K$ and $L$, $V_{\phi}(K, L)$, given in formula (\ref{definition-mixed-volume-2016-07-11}) does not have homogeneity  in general. In order to define the homogeneous Orlicz affine and geominimal surface areas, a homogeneous Orlicz $L_{\phi}$ mixed volume of convex bodies $K$ and $L$, denoted by  $\widehat{V}_{\phi} (K, L)$, is needed.

 \bd \label{homogeneous-mixed-volume-11}  For $K, L\in \cK_0$ and $\phi\in \mathcal{I}$,  define $\widehat{V}_{\phi} (K, L)$ by  \begin{equation} \label{mixed volume hat} \widehat{V}_{\phi} (K, L)=\inf_{\lambda>0}  \bigg\{ \int _{S^{n-1}} \phi\Big(\frac{n|K|\cdot h_L(u)}{\lambda \cdot  h_K(u)}\Big)\dV\leq 1\bigg\}.\end{equation} While if $\phi\in \mathcal{D}$, $\widehat{V}_{\phi} (K, L)$  is defined as above with `` $\leq 1$" replaced by `` $\geq 1$".  \ed

Clearly  $\widehat{V}_{\phi} (K, L)>0$ for $K, L\in \cK_0$. Definition \ref{homogeneous-mixed-volume-11} is motivated by formula (10.5) in \cite{Gardner2014} with a slight modification; namely, an extra term $n|K|$ has been added in the numerator of the variable inside $\phi$. This extra term $n|K|$ is added in order to get, as $\phi(1)=1$,   \begin{equation} \widehat{V}_{\phi} (K, K)=n|K|. \label{equal to volume} \end{equation}  Formula (\ref{mixed volume hat}) coincides with formula (10.5) in \cite{Gardner2014} if $\phi\in \mathcal{I}$ is convex.

 The following corollary states the homogeneity of $\widehat{V}_{\phi} (K, L)$, which has been made to be the same as the classical mixed volume $V_1(K, L)$.

 \bc \label{corollary:homogeneous-1} Let $s, t>0$ be constants. For $K, L\in \cK_0$, one has, for $\phi\in \mathcal{I}\cup \mathcal{D}$,
 \begin{equation} \label{mixed volume-homogeneous-1}   \widehat{V}_{\phi} (sK, tL)=s^{n-1}t\cdot  \widehat{V}_{\phi} (K, L).  \end{equation}   \ec

 \begin{proof} For $\phi\in \mathcal{I}$, one has, by letting $\eta=s^{n-1}t\lambda$,  \begin{eqnarray*} \widehat{V}_{\phi} (sK, tL) &=& \inf_{\eta>0}  \bigg\{ \int _{S^{n-1}} \phi\Big(\frac{t\cdot n|K|\cdot  h_L(u)}{\eta \cdot  s^{1-n} \cdot h_K(u)}\Big)\dV\leq 1\bigg\} \\ &=& s^{n-1}t\cdot \inf_{\lambda>0}  \bigg\{ \int _{S^{n-1}} \phi\Big(\frac{n|K|\cdot h_L(u)}{\lambda \cdot  h_K(u)}\Big)\dV\leq 1 \bigg\}.\end{eqnarray*}  That is, $\widehat{V}_{\phi} (sK, tL)=s^{n-1}t\cdot  \widehat{V}_{\phi} (K, L). $  In particular, if $s=1$ and $t>0$, then $$ \widehat{V}_{\phi} (K, tL)=t\cdot \widehat{V}_{\phi} (K, L);$$  while if $t=1$ and $s>0$, then  $$ \widehat{V}_{\phi} (sK, L)=s^{n-1}\cdot \widehat{V}_{\phi} (K, L).$$
  The case for $\phi\in\mathcal{D}$ follows along the same way.\end{proof}

Let the function $G: (0, \infty)\rightarrow (0, \infty)$ be given by $$G(\lambda)=  \int _{S^{n-1}} \phi\Big(\frac{n|K|\cdot h_L(u)}{\lambda \cdot  h_K(u)}\Big)\dV.$$ For $\phi\in \mathcal{I}$,  the function $G$ is strictly decreasing on $\lambda$ with $$\lim_{\lambda\rightarrow 0} G(\lambda)=\lim_{t \rightarrow \infty} \phi(t) \ \ \ \mbox{and} \ \ \  \lim_{\lambda \rightarrow \infty} G(\lambda)=\lim_{t \rightarrow 0} \phi(t). $$ As an example, we show that $\lim_{\lambda\rightarrow 0} G(\lambda)=\lim_{t \rightarrow \infty} \phi(t)$. To this end,  as $\phi\in \mathcal{I}$ is strictly increasing,  we have  \begin{eqnarray*} G(\lambda) &=&  \int _{S^{n-1}} \phi\Big(\frac{n|K|\cdot h_L(u)}{\lambda \cdot  h_K(u)}\Big)\dV \\  &\geq& \int _{S^{n-1}} \phi\bigg(\frac{n|K|\cdot \min_{u\in \sphere} h_L(u)}{\lambda \cdot  \max_{u\in \sphere} h_K(u)}\bigg)\dV  \\ &=& \phi\bigg(\frac{n|K|\cdot \min_{u\in \sphere} h_L(u)}{\lambda \cdot  \max_{u\in \sphere} h_K(u)}\bigg). \end{eqnarray*}  This yields  $$ \lim_{\lambda\rightarrow 0} G(\lambda)  \geq   \lim_{\lambda\rightarrow 0}\phi\bigg(\frac{n|K|\cdot \min_{u\in \sphere} h_L(u)}{\lambda \cdot  \max_{u\in \sphere} h_K(u)}\bigg)=\lim_{t \rightarrow \infty} \phi(t).$$ Similarly, one has  $$ \lim_{\lambda\rightarrow 0} G(\lambda)  \leq   \lim_{\lambda\rightarrow 0}\phi\bigg(\frac{n|K|\cdot \max_{u\in \sphere} h_L(u)}{\lambda \cdot  \min_{u\in \sphere} h_K(u)}\bigg)=\lim_{t \rightarrow \infty} \phi(t),$$  and the desired result follows.  On the other hand,  the function $G$ for $\phi\in \mathcal{D}$ is strictly increasing on $\lambda$ with  $$\lim_{\lambda\rightarrow 0} G(\lambda)=\lim_{t \rightarrow \infty} \phi(t) \ \ \ \mbox{and} \ \ \  \lim_{\lambda \rightarrow \infty} G(\lambda)=\lim_{t \rightarrow 0} \phi(t). $$  Together with $\phi(1)=1$,  we have proved the following corollary.

  \bc \label{corollary:homogeneous-2} Let $\phi\in \mathcal{I}\cup \mathcal{D}$ and $K, L\in \cK_0$. Then $\widehat{V}_{\phi} (K, L)>0$, and $\lambda_0= \widehat{V}_{\phi} (K, L)$  if and only if $$G(\lambda_0)= \int _{S^{n-1}} \phi\Big(\frac{n|K|\cdot h_L(u)}{\lambda_0 \cdot  h_K(u)}\Big)\dV=1.$$ \ec    For $\phi(t)=t^p$, one writes $\widehat{V}_{p} (K, L)$ instead of $\widehat{V}_{\phi} (K, L)$. A simple calculation shows that \begin{eqnarray*} \widehat{V}_{p} (K, L) &=&n|K| \cdot \bigg[\int _{S^{n-1}} \bigg(\frac{h_L(u)}{h_K(u)}\bigg)^p \dV\bigg]^{1/p}\\ &=& (n|K|)^{1-\frac{1}{p}}\cdot \big(n {V}_{p} (K, L)\big)^{1/p},\end{eqnarray*} where $V_p(K, L)$ is the $L_p$ mixed volume of $K$ and $L$ for $p\in \bbR$ \cite{Lu1, Ye2015a}, i.e.,  $${V}_{p} (K, L)= \frac{1}{n}\int _{S^{n-1}} h_L(u)^p  h_K(u)^{1-p}\, dS_K(u). $$

If $\phi\in \mathcal{I}$ is convex, the Orlicz-Minkowski inequality holds \cite{Gardner2014}: for $K, L\in\cK_0$, one has
\begin{equation}\label{Minkowski-inequality-2016-713} \widehat{V}_\phi(K,L)\geq n\cdot {|K|}^\frac{n-1}{n}
{|L|}^\frac{1}{n}. \end{equation}
If in addition $\phi$ is strictly convex, equality holds if and only if
$K$ and $L$ are dilates to each other. In particular, the classical Minkowski inequality is related to $\phi(t)=t$: for $K,
L\in \cK_0$, one has \be\label{classical Minkowski inequality} V_1(K,
L)^{n}\geq |K|^{n-1}|L|, \ee with equality  if and only if $K$ and
$L$ are homothetic to each other (i.e., there exist a constant $s>0$ and a vector $a\in \bbR^n$ such that $K=sL+a$).

In order to define the homogeneous Orlicz affine surface areas, we need to define $\widehat{V}_{\phi} (K, L^\circ)$ for  $L\in \cS_0$. The definition is similar to Definition \ref{homogeneous-mixed-volume-11} but with $h_{L^\circ}$ replaced by $1/\rho_L$. That is,  for $\phi\in \mathcal{I}\cup \mathcal{D}$, $\widehat{V}_{\phi} (K, L^\circ)$ for $K\in \cK_0$ and $L\in \cS_0$  is defined by the constant $\lambda_0$ such that  \begin{equation} \label{mixed volume hat-star} \int
_{S^{n-1}} \phi\Big(\frac{n|K|}{\lambda_0 \cdot \rho_L(u)\cdot
h_K(u)}\Big)\dV =1.\end{equation} Of course, $\widehat{V}_{\phi} (K, L^\circ)$ for $K, L\in \cK_0$ given by formula
(\ref{mixed volume hat-star}) coincides with the one given by
formula (\ref{mixed volume hat}). Note that $\widehat{V}_{\phi} (K,
L^\circ)$ for $K\in \cK_0$ and $L\in \cS_0$ is also homogeneous as stated in Corollary \ref{corollary:homogeneous-1}.

For function $\phi \in \mathcal{I}\cup\mathcal{D}$, let $F(t)=\phi(t^{-1/n})$ and hence $\phi(t)=F(t^{-n})$. The
relations between $\phi$ and $F$ have been  discussed in \cite{Ye2015b}. For example, a): $\phi$ and $F$ have opposite monotonicity, that is, if one is strictly decreasing (increasing), then the other one will be strictly increasing (decreasing); b): if one is
convex and increasing, then the other one is convex and decreasing. As mentioned in \cite{Ye2015b}, to define Orlicz affine and geominimal surface areas, one needs to consider the convexity and concavity of $F$ instead of the convexity and concavity of $\phi$ itself.  Let
\begin{eqnarray*}   \widehat{\Phi}_1&=&\big\{\phi: \phi\in \mathcal{I}  \ \mbox{and} \ F \ \mbox{is strictly convex} \big\};\\
\widehat{\Phi}_2 &=&\{\phi: \phi\in
\mathcal{D} \ \mbox{and} \ F \ \mbox{is strictly concave}\}.\end{eqnarray*} We often use $\widehat{\Phi}$ for $\widehat{\Phi}_1\cup\widehat{\Phi}_2$.  Sample functions in  $\widehat{\Phi}$ are: $t^p$ with $p\in (-n,
0)\cup (0, \infty)$.  Similarly,   let
\begin{eqnarray*} \widehat{\Psi}=\{\phi: \phi\in \mathcal{D}  \ \mbox{and} \ F \ \mbox{is strictly convex}\}.\end{eqnarray*} Note that if $\phi\in\mathcal{I}$
such that $F$ is strictly concave, then $\phi$ is a constant. We are
not interested in this case.   The set $\widehat{\Psi}$ contains
functions such as $t^p$ with $p\in (-\infty, -n)$.

\bd \label{definition:homogeneous:Orlicz:affine:surface}  Let $K\in \cK_0$.  The homogeneous Orlicz $L_{\phi}$ affine surface area of $K$, denoted by $\affine(K)$, is defined  by \begin{eqnarray}\label{affine surface area hat}
 \affine(K)&=&\inf_{L\in \cS_0} \Big\{\widehat{V}_{\phi} (K, \vrad(L)L^\circ)\Big\} \ \ \ \mathrm{for} \ \ \phi\in \widehat{\Phi};  \\
 \label{geominimal surface area hat-0712}
 \affine(K)&=&\sup_{L\in \cS_0} \Big\{\widehat{V}_{\phi} (K, \vrad(L)L^\circ)\Big\} \ \ \ \mathrm{for} \ \ \phi\in \widehat{\Psi}. \end{eqnarray}
  The homogeneous Orlicz $L_{\phi}$ geominimal surface area of $K$,  denoted by $\affineg(K)$, is defined similarly with $\cS_0$ replaced by  $\cK_0$.  \ed

Clearly $\affine(K)\leq \affineg(K)$ if $\phi\in \widehat{\Phi}$ and $\affine(K)\geq \affineg(K)$ if $\phi\in \widehat{\Psi}$.
 For $\phi(t)=t^p$, one writes $\affinep(K)$ instead of $\affine(K)$. In particular, for $-n\neq p\in \bbR$,   $$\affinep(K)= (n\o_n)^{-1/n}\cdot\big(as_p(K)\big)^{\frac{n+p}{np}}\cdot (n|K|)^{1-\frac{1}{p}},$$ where $as_p(K)$ is the $L_p$ affine surface area of $K$ (see e.g., \cite{Lu1, Ye2015a}):  \begin{eqnarray*}\label{Lp-affine-surface-area-Lutwak--1}
as_p(K)&=&\inf_{L\in \cS_0} \left\{n V_p(K, L^\circ) ^{\frac{n}{n+p}}\ \nonumber
|L|^{\frac{p}{n+p}}\right\}, \ \ \ p\geq 0; \\\label{Lp-affine-surface-area-Lutwak--2}
as_p(K)&=&\sup_{L\in \cS_0} \left\{n V_p(K, L^\circ) ^{\frac{n}{n+p}}\
|L|^{\frac{p}{n+p}}\right\}, \ \ \ -n\neq p<0.
\end{eqnarray*}  Similarly, for $-n\neq p\in \bbR$,  $$\affinegp(K)= (n\o_n)^{-1/n}\cdot\big(\tilde{G}_p(K)\big)^{\frac{n+p}{np}}\cdot (n|K|)^{1-\frac{1}{p}},$$  where $\tilde{G}_p(K)$ is the $L_p$ geominimal surface area \cite{Lu1, Ye2015a}:  \begin{eqnarray*}\label{Lp-geominimal-surface-area-ye--1}
\tilde{G}_p(K)&=&\inf_{L\in \cK_0} \left\{n V_p(K, L^\circ) ^{\frac{n}{n+p}}\
|L|^{\frac{p}{n+p}}\right\}, \ \ \ p\geq 0; \\\label{Lp-geominimal-surface-area-ye--2}
\tilde{G}_p(K)&=&\sup_{L\in \cK_0} \left\{n V_p(K, L^\circ) ^{\frac{n}{n+p}}\
|L|^{\frac{p}{n+p}}\right\}, \ \ \ -n\neq p<0.
\end{eqnarray*}

When $K=\ball$, both $\affine(\ball)$ and $\affineg(\ball)$ can be calculated precisely.

\bc \label{equal-ball-0715} For $\phi\in \widehat{\Phi}\cup \widehat{\Psi}$, one has
\begin{equation} \label{ball-2} \affine(\ball)=\affineg(\ball)=n\omega_n. \end{equation}  \ec

\begin{proof}  We only prove  $\affine(\ball)=n\omega_n$ with $\phi\in \widehat{\Phi}_1$, and the other cases follow along the same lines. As $\phi\in \widehat{\Phi}_1$, one sees that $\phi$ is strictly increasing and $F(t)=\phi(t^{-1/n})$ is strictly convex.
First of all, by formulas (\ref{equal to volume}) and (\ref{affine surface area hat}), one has \begin{eqnarray}\label{ball-1} \affine(\ball) \leq  \widehat{V}_{\phi}(\ball ,  \ball) = n\omega_n.
  \end{eqnarray} From Corollary \ref{corollary:homogeneous-2} and Jensen's inequality, the fact that $F$ is strictly convex yields \begin{eqnarray*} 1&=&\int _{S^{n-1}} \phi\bigg(\frac{n\omega_n \cdot \vrad(L)}{\widehat{V}_{\phi}(\ball, \vrad(L) L^\circ) \cdot  \rho_L(u)}\bigg)\cdot \frac{1}{n\omega_n}\,d\sigma(u)   \\ &\geq& F\bigg( \int _{S^{n-1}} \frac{[\widehat{V}_{\phi}(\ball, \vrad(L) L^\circ)]^n \cdot  \rho_L^n(u)} {[n\omega_n \cdot \vrad(L)]^n\cdot n\omega_n}\,d\sigma(u)\bigg)  \\ &=& \phi\bigg(\frac{n\omega_n}{\widehat{V}_{\phi}(\ball, \vrad(L) L^\circ)}\bigg).  \end{eqnarray*} As $\phi(1)=1$ and $\phi$ is strictly  increasing, one gets, for all $L\in \cS_0$,  $${\widehat{V}_{\phi}(\ball, \vrad(L) L^\circ)}\geq {n\omega_n}. $$ The desired equality follows by taking the infimum over $L\in \cS_0$ and by formula (\ref{ball-1}). \end{proof}

  \begin{proposition}\label{affine-invariance:proposition} Let $K\in \cK_0$ and $A\in GL(n)$. For $\phi\in \widehat{\Phi}\cup\widehat{\Psi}$, one has  $$\affine(AK) =|\det A|^{\frac{n-1}{n}} \cdot  \affine(K) \ \ \ \mathrm{and} \ \ \ \affineg(AK) =|\det A|^{\frac{n-1}{n}} \cdot  \affineg(K).$$  \end{proposition}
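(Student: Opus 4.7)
The plan is to reduce affine invariance to a change of variable inside the defining infimum or supremum. The key ingredient is a transformation rule for the probability measure $d\widetilde V_K$: for $A\in GL(n)$, the map $\tau_A:\sphere\to\sphere$ given by $\tau_A(v)=A^{-t}v/|A^{-t}v|$ is a homeomorphism, and for every $f\in C(\sphere)$,
\begin{equation*}
\int_{\sphere}f(u)\,d\widetilde{V}_{AK}(u)=\int_{\sphere}f(\tau_A(v))\,d\widetilde{V}_K(v).
\end{equation*}
This is a consequence of the boundary correspondence $\partial K\ni y\leftrightarrow Ay\in\partial AK$: outer unit normals transform by $\tau_A$, and the cone volume measures on $\partial K$ and $\partial AK$ differ by the factor $|\det A|$, which cancels against $|AK|=|\det A|\,|K|$ in the normalisation of $\widetilde V$. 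Alongside this, I will use the pointwise identities
\begin{equation*}
h_{AK}(\tau_A(v))=\frac{h_K(v)}{|A^{-t}v|},\qquad \rho_{A^{-t}M}(\tau_A(v))=|A^{-t}v|\,\rho_M(v)\qquad(M\in\cS_0),
\end{equation*}
both of which follow immediately from $h_{AK}(u)=h_K(A^tu)$ and the homogeneity of the radial function.

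With these tools in hand, the heart of the proof is the substitution $L=A^{-t}M$ in the infimum (or supremum) defining $\affine(AK)$. This substitution is a bijection of $\cS_0$ onto itself because linear isomorphisms preserve star-shapedness about the origin, and likewise a bijection of $\cK_0$ onto itself, so it parametrises the entire variational family. Inserting $L=A^{-t}M$ into the defining integral (\ref{mixed volume hat-star}) for $\widehat V_\phi(AK,\vrad(L)L^\circ)$ and changing variables by $u=\tau_A(v)$, the factors $|A^{-t}v|$ in the product $h_{AK}(\tau_A(v))\,\rho_{A^{-t}M}(\tau_A(v))=h_K(v)\,\rho_M(v)$ cancel; using $|AK|=|\det A|\,|K|$ together with $\vrad(A^{-t}M)=|\det A|^{-1/n}\vrad(M)$, the argument of $\phi$ simplifies to
\begin{equation*}
\frac{|\det A|^{(n-1)/n}\,n|K|\,\vrad(M)}{\lambda\,\rho_M(v)\,h_K(v)}.
\end{equation*}
By the uniqueness in Corollary \ref{corollary:homogeneous-2}, this yields the transformation rule
\begin{equation*}
\widehat V_\phi\bigl(AK,\vrad(A^{-t}M)(A^{-t}M)^\circ\bigr)=|\det A|^{(n-1)/n}\,\widehat V_\phi\bigl(K,\vrad(M)M^\circ\bigr).
\end{equation*}

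Taking the infimum over $M\in\cS_0$ (or the supremum, when $\phi\in\widehat\Psi$) then gives $\affine(AK)=|\det A|^{(n-1)/n}\affine(K)$, and the identical argument with $\cS_0$ replaced by $\cK_0$ delivers the claim for $\affineg$. I expect the main technical obstacle to be the bookkeeping of powers of $|\det A|$: one factor appears through $|AK|$ in the numerator, while the substitution $L=A^{-t}M$ contributes $|\det A|^{-1/n}$ via the volume radius, and only their combination produces the sharp exponent $(n-1)/n$. A secondary, more conceptual point is the transformation rule for $d\widetilde V_K$; although classical, it is precisely what makes the factors $|A^{-t}v|$ combine so cleanly in the change of variables, and without it the substitution would not close up.
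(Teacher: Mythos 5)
Your proof is correct and follows essentially the same route as the paper: reparametrize the variational family via $L\mapsto A^{-t}L$, change variables on the sphere, and use the homogeneity of $\widehat V_\phi$ to absorb the volume-radius factor. The only real difference is one of emphasis: you state explicitly the transformation rule $(\tau_A)_*\widetilde V_K=\widetilde V_{AK}$ for the normalized cone-volume measure, whereas the paper passes over this point silently when replacing $d\widetilde V_{AK}(v)$ by $d\widetilde V_K(u)$ (with $u=A^tv/\lVert A^tv\rVert$, the inverse of your $\tau_A$); making that step explicit is a small improvement in rigor, not a different argument.
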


\begin{proof}   Let $A\in GL(n)$ and $\|\cdot\|$ be the usual Euclidean norm. For $v\in S^{n-1}$, let $u=u(v)=\frac{A^{t} v}{\|A^{t} v\|}$. By the definitions of support and radial functions, one can easily check that  $$h_{AK}(v)=\|A^t v\| \cdot h_K(u) \ \ \ \mathrm{and} \ \ \  \r_{A^{-t} L}(v){\|A^tv\|}= {\r_L(u)}.$$      Consequently, $h_{AK}(v) \r_{A^{-t} L}(v) = h_K(u)   {\r_L(u)}$ and   \begin{eqnarray*}   \widehat{V}_{\phi} (AK, (A^{-t} L)^\circ) &=&\inf_{\lambda>0}  \bigg\{ \int _{S^{n-1}} \phi\Big(\frac{n|AK|}{\lambda \cdot h_{AK}(v) \r_{A^{-t} L}(v)}\Big)\,d\widetilde{V}_{AK}(v)\leq 1\bigg\}\\&=&\inf_{\lambda>0}  \bigg\{ \int _{S^{n-1}} \phi\Big(\frac{|\det A| \cdot n|K|}{\lambda \cdot h_K(u)   {\r_L(u)}}\Big)\dV\leq 1\bigg\}\\ &=&|\det A| \cdot \inf_{\eta>0}  \bigg\{ \int _{S^{n-1}} \phi\Big(\frac{ n|K|}{\eta \cdot h_K(u)   {\r_L(u)}}\Big)\dV\leq 1\bigg\},
 \end{eqnarray*} where $\lambda=|\det A| \cdot \eta$.  Consequently,
 \be\label{transform}
 \widehat{V}_{\phi} (AK, (A^{-t} L)^\circ)=|\det A| \cdot \widehat{V}_{\phi} (K, L^\circ).
 \ee
 Combining with equation (\ref{mixed volume-homogeneous-1}), one gets, for $\phi\in \widehat{\Phi}$,   \begin{eqnarray*} \widehat{V}_{\phi} (AK, \vrad(A^{-t} L)\cdot  (A^{-t} L)^\circ)&=& |\det A| \cdot |\det A^t|^{-1/n} \cdot \widehat{V}_{\phi} (K, \vrad(L)  L^\circ)\\ &=&|\det A|^{\frac{n-1}{n}}  \cdot \widehat{V}_{\phi} (K, \vrad(L)  L^\circ).  \end{eqnarray*}  The desired result follows immediately by taking the infimum over $L\in \cS_0$.  Other cases follow along the same lines.\end{proof}

Proposition \ref{affine-invariance:proposition} implies that both
$\affine(\cdot)$ and $\affineg(\cdot)$ are invariant under the volume
preserving linear transforms on $\bbR^n$. That is, for all $A\in
SL(n)$ and $K\in \cK_0$,   $$\affine(AK) =   \affine(K) \ \ \ \mathrm{and} \ \ \
\affineg(AK) =   \affineg(K).$$ In particular, $\affine(\lambda K) =\lambda ^{n-1} \cdot  \affine(K)$ and $\affineg(\lambda K) =\lambda ^{n-1} \cdot  \affineg(K)$ for $\lambda>0$ a constant.
This means that both $\affine(\cdot)$ and $\affineg(\cdot)$ have homogeneity.

 An immediate consequence of formula (\ref{ball-2}) and Proposition \ref{affine-invariance:proposition} is: for $\phi\in \widehat{\Phi}\cup \widehat{\Psi}$ and for the ellipsoid $\E=A\ball$ with $A\in GL(n)$,  $$\affine(\E)=\affineg(\E)=|\det A|^{\frac{n-1}{n}} \cdot  n\omega_n. $$

We can prove the following affine isoperimetric inequalities for the
homogeneous Orlicz $L_{\phi}$ affine and geominimal surface areas.

\bt  \label{homogeneous Orlicz affine isoperimetric inequality} Let  $K\in   \widetilde{\cK}$ be a convex body with its centroid or Santal\'{o} point at the origin.  \vskip 2mm
\noindent  (i)   For $\phi\in \widehat{\Phi}$, one has
\begin{eqnarray*}\frac{\affine(K)}{\affine(\ball)}\ \leq \  \frac{\affineg(K)}{\affineg(\ball)}\ \leq \
\bigg(\frac{|K|}{|\ball|}\bigg)^{\frac{n-1}{n}} \end{eqnarray*}
with equality  if and only if $K$ is an origin-symmetric ellipsoid.

\vskip 2mm \noindent (ii)  For $\phi\in \widehat{\Psi}$, there is a universal constant
$c>0$ such that  \begin{eqnarray*}\frac{\affine(K)}{\affine(\ball)} \ \geq \
\frac{\affineg(K)}{\affineg(\ball)}  \ \geq  c\cdot
\bigg(\frac{|K|}{|\ball|}\bigg)^{\frac{n-1}{n}}. \end{eqnarray*}
\et

\noindent {\bf Remark.} Theorem \ref{homogeneous Orlicz affine isoperimetric
inequality} asserts that among all convex bodies in $\widetilde{\cK}$ with fixed volume, the homogeneous Orlicz $L_{\phi}$ affine and geominimal
surface areas for $\phi\in \widehat{\Phi}$ attain their maximum at
origin-symmetric ellipsoids.  The $L_p$ affine isoperimetric inequalities for
the $L_p$ affine and geominimal surface areas  are special cases of Theorem \ref{homogeneous Orlicz affine isoperimetric
inequality} with $\phi(t)=t^p$ (see e.g., \cite{Lu1,
Petty1974, Petty1985, WY2008, Ye2015a}).

\begin{proof}  Formulas   (\ref{equal to
volume}) and (\ref{mixed volume-homogeneous-1})  together with Definition
\ref{definition:homogeneous:Orlicz:affine:surface} imply that  for
all $\phi\in \widehat{\Phi}$ and $K\in \cK_0$,
\begin{eqnarray}\label{comparision-1} \affine(K)\leq \affineg(K) \leq
\widehat{V}_{\phi} (K,\vrad(K^\circ)K)=n|K|\cdot \vrad(K^\circ).
\end{eqnarray} If $K\in \widetilde{\cK}$, the
Blaschke-Santal\'{o} inequality further implies,  for all
$\phi\in \widehat{\Phi}$,  \begin{eqnarray*}\affine(K) \leq \affineg(K) \leq
n|K|^{\frac{n-1}{n}}\cdot \omega_n^{1/n}  \end{eqnarray*} with
equality if and only if $K$ is an origin-symmetric ellipsoid (i.e.,
those make the equality hold in the Blaschke-Santal\'{o}
inequality). Dividing both sides by $\affine(\ball)=n\omega_n$, one gets the desired inequality in (i).

Similarly,  for all $\phi\in \widehat{\Psi}$ and for
all $K\in \cK_0$,
\begin{eqnarray}\label{comparision-2}\affine(K)\geq \affineg(K) \geq  n|K|\cdot
\vrad(K^\circ).   \end{eqnarray}  Dividing both sides by $\affine(\ball)=n\omega_n$, one gets
\begin{eqnarray*}\frac{\affine(K)}{\affine(\ball)} \ \geq \
\frac{\affineg(K)}{\affineg(\ball)}  \ \geq  c\cdot
\bigg(\frac{|K|}{|\ball|}\bigg)^{\frac{n-1}{n}}, \end{eqnarray*}  where the inverse Santal\'{o} inequality \cite{BM} has been used: there is a universal constant $c>0$ such that for all $K\in \widetilde{\cK}$, \begin{equation}\label{inverse-0717--0011} |K|\cdot |\polar|\geq c^n \omega_n^2. \end{equation}  See \cite{GK2, Nazarov2012} for estimates of the constant $c$.  \end{proof}

The following Santal\'{o} type inequalities follow immediately from Theorem \ref{homogeneous Orlicz affine isoperimetric inequality} and the Blaschke-Santal\'{o} inequality.
 \bt  \label{homogeneous Orlicz B-S inequality}  Let $K\in \widetilde{\cK}$ be a convex body with its centroid or Santal\'{o} point at the origin. \vskip 2mm \noindent  (i)  For $\phi\in \widehat{\Phi}$, one has  \begin{eqnarray*} \frac{\affine(K)\cdot  \affine(K^\circ)} { [{\affine(\ball)}]^2} \leq  \frac{ \affineg(K)\cdot  \affineg(K^\circ)}{ [{\affineg(\ball)}]^2} \leq  1.  \end{eqnarray*}  Equality holds if and only if $K$ is an origin-symmetric ellipsoid.

\vskip 2mm \noindent (ii) For $\phi\in \widehat{\Psi}$, there is a universal constant $c>0$ such that   \begin{eqnarray*}\frac{\affine(K)\cdot  \affine(K^\circ)} { [{\affine(\ball)}]^2} \geq  \frac{ \affineg(K)\cdot  \affineg(K^\circ)}{ [{\affineg(\ball)}]^2}  \geq  c^{n+1}.  \end{eqnarray*}   \et

A finer calculation could lead to stronger arguments than Theorem
\ref{homogeneous Orlicz affine isoperimetric inequality}, where the conditions on the centroid or the Santal\'{o}
point of $K$ can be removed. That is,  $\widetilde{\cK}$
in Theorem  \ref{homogeneous Orlicz affine isoperimetric
inequality} can be replaced by $\cK_0$. See similar results in \cite{Ye2013, Ye2015a, Ye2015b, Zhang2007}.
 \bc \label{homogeneous Orlicz affine isoperimetric inequality-0715}    Let  $K\in \cK_0$.  If either $\phi\in \widehat{\Phi}_1$  is  concave or $\phi\in \widehat{\Phi}_2$ is convex, then   \begin{eqnarray*}\frac{\affine(K)}{\affine(\ball)} \leq \frac{\affineg(K)}{\affineg(\ball)}\leq  \bigg(\frac{|K|}{|\ball|}\bigg)^{\frac{n-1}{n}}.  \end{eqnarray*}   In addition, if  either $\phi\in \widehat{\Phi}_1$ is strictly concave or $\phi\in \widehat{\Phi}_2$ is strictly convex, equality holds if and only if $K$ is an origin-symmetric ellipsoid.   \ec

To prove this corollary, one needs the following cyclic inequality. For convenience, let $H=\phi\circ\psi^{-1}$, where $\psi^{-1}$, the inverse of $\psi$,  always exists if $\psi\in \widehat{\Phi}\cup\widehat{\Psi}$.
 \bt\label{cyclic}
Let $K\in \cK_0$.  Assume one of the following conditions holds: a)
$\phi\in \widehat{\Phi}$ and $\psi\in \widehat{\Psi}$; b) $H$ is
convex with $\phi\in \widehat{\Phi}_2$ and $\psi\in
\widehat{\Phi}_1$; c)  $H$ is concave  with $\phi, \psi\in
\widehat{\Phi}_1$; d) $H$ is convex with either $\phi, \psi\in
\widehat{\Phi}_2$ or $\phi, \psi\in \widehat{\Psi}$. Then
\begin{equation*} \affine(K)\leq \widehat{\Omega}^{orlicz}_{\psi} (K) \ \ \ \mathrm{and} \ \ \ \affineg(K)\leq \widehat{G}^{orlicz}_{\psi} (K). \end{equation*} \et

\begin{proof}   The case for condition a) follows immediately from  formulas (\ref{comparision-1}) and (\ref{comparision-2}).  We only prove the case for condition b),  and the other cases follow along the same fashion. Assume that  condition b) holds and then $H$ is  convex. Corollary \ref{corollary:homogeneous-2} and Jensen's inequality  imply that  \begin{eqnarray*}
 1&=& \int _{S^{n-1}} \phi\Big(\frac{n|K|}{\widehat{V}_{\phi}(K, L^\circ)\cdot \rho_L(u) \cdot  h_K(u)}\Big)\dV \\ &=& \int _{S^{n-1}} H\bigg(\psi\Big(\frac{n|K|}{\widehat{V}_{\phi}(K, L^\circ) \cdot \rho_L(u) \cdot  h_K(u)}\Big)\bigg)\dV \\ &\geq& H\bigg(\int _{S^{n-1}} \psi\Big(\frac{n|K| }{\widehat{V}_{\phi}(K, L^\circ) \cdot \rho_L(u)\cdot  h_K(u)}\Big)\dV\bigg).  \end{eqnarray*} Together with  Corollary \ref{corollary:homogeneous-2} and the facts that  $H$ is decreasing and $H(1)=1$, one has
  \begin{eqnarray}\label{comparison-3}
\int _{S^{n-1}} \psi\Big(\frac{n|K|}{\widehat{V}_{\psi}(K, L^\circ)
\cdot \rho_L(u)\cdot  h_K(u)}\Big)\dV  \leq \int _{S^{n-1}}
\psi\Big(\frac{n|K|}{\widehat{V}_{\phi}(K, L^\circ)\cdot \rho_L(u)
\cdot  h_K(u)}\Big)\dV.  \ \ \end{eqnarray} Note that $\psi\in
\mathcal{I}$ (increasing). It follows from formula (\ref{comparison-3}) that
$\widehat{V}_{\phi}(K, L^\circ) \leq \widehat{V}_{\psi}(K,
L^\circ).$ Together with Corollary \ref{corollary:homogeneous-1}
and Definition \ref{definition:homogeneous:Orlicz:affine:surface}, one gets 
the desired result.\end{proof}

\vskip 2mm \noindent {\em Proof of Corollary \ref{homogeneous Orlicz
affine isoperimetric inequality-0715}.} Let $\psi(t)=t$ and $\phi\in
\widehat{\Phi}_2$ be convex. Then $H=\phi$ satisfies
condition b) in Theorem \ref{cyclic} and thus $ \affine(K)\leq
\widehat{\Omega}_{1}^{orlicz} (K).$ Note that
$\widehat{\Omega}_{1}^{orlicz} (K)$ is essentially the classical
geominimal surface area and is translation invariant. That is, for
any $z_0\in \bbR^n$, $\widehat{\Omega}_{1} ^{orlicz}
(K-z_0)=\widehat{\Omega}_{1}^{orlicz} (K)$. In particular, one
selects $z_0$ to be the point in $\bbR^n$ such that $K-z_0\in
\widetilde{\cK}$ (i.e., $z_0$ is either the centroid or the
Santal\'{o} point of $K$). Theorem \ref{homogeneous Orlicz affine isoperimetric inequality} implies that $$\frac{\affine(K)}{\affine(\ball)} \leq
\frac{\widehat{\Omega}_{1}^{orlicz}
(K-z_0)}{\widehat{\Omega}_{1}^{orlicz}(\ball)}\leq
\bigg(\frac{|K-z_0|}{|\ball|}\bigg)^{\frac{n-1}{n}}=\bigg(\frac{|K|}{|\ball|}\bigg)^{\frac{n-1}{n}}.
$$

To characterize the equality, due to the homogeneity of $\affine(\cdot)$, it is enough to prove that if $\phi$ is in addition strictly convex, ${\affine(K)}={\affine(\ball)}$  if and only if $K$ is an origin-symmetric ellipsoid with $|K|=\omega_n$. First of all,  if $K$ is an origin-symmetric ellipsoid with $|K|=\omega_n$, then ${\affine(K)}={\affine(\ball)}$ follows from Corollary \ref{equal-ball-0715} and Proposition \ref{affine-invariance:proposition}. On the other hand, by Theorem \ref{homogeneous Orlicz B-S inequality},  ${\affine(K)}={\affine(\ball)}$  holds only if $K-z_0$ is an origin-symmetric ellipsoid with $|K|=\omega_n$. By Proposition \ref{affine-invariance:proposition}, it is enough to claim  $K=\ball+z_0$ with $z_0=0$. Corollary \ref{equal-ball-0715} and Definition \ref{definition:homogeneous:Orlicz:affine:surface}  yield $$n\omega_n={\affine(\ball)}={\affine(K)}={\affine(\ball+z_0)}\leq \widehat{V}_{\phi}(\ball+z_0, \ball).$$  Note that  $\phi \in \widehat{\Phi}$ is convex and decreasing. Combining with Corollary \ref{corollary:homogeneous-2}, one has \begin{eqnarray*}  1 &=& \int _{S^{n-1}} \phi\bigg(\frac{n\omega_n}{ \widehat{V}_{\phi}(\ball+z_0, \ball) \cdot  h_{\ball+z_0}(u)}\bigg)\cdot \frac{h_{\ball+z_0}(u)}{n\omega_n}\cdot \,d\sigma(u)   \\&\geq & \phi\bigg(\int _{S^{n-1}} \frac{\,d\sigma(u)}{ \widehat{V}_{\phi}(\ball+z_0, \ball) } \bigg) \geq   1. \end{eqnarray*} As $\phi$ is strictly convex, equality holds if and only if $h_{\ball+z_0}(u)$ is a constant on $\sphere$. This yields $z_0=0$ as desired.

 The case for $\phi\in \widehat{\Phi}_1$ being concave (with characterization for equality) follows along the same lines. $\hfill \Box$

\section{The Orlicz-Petty bodies and the continuity of the homogeneous Orlicz geominimal surface areas}\label{section-homogeneous-petty}

This section concentrates on the continuity of the homogeneous Orlicz geominimal surface areas. In Subsection \ref{Semicontinuity of the homogeneous Orlicz geominimal surface areas}, we first show that the homogeneous Orlicz geominimal surface areas are semicontinuous on $\cK_0$ with respect to the Hausdorff distance. The existence and uniqueness of the Orlicz-Petty bodies, under certain conditions, will be proved in Subsection \ref{The Orlicz-Petty bodies: existence and basic properties}. Our main result on the continuity will be given in Subsection \ref{Continuity of the homogeneous Orlicz geominimal surface area}.

\subsection{Semicontinuity of the homogeneous Orlicz geominimal surface areas} \label{Semicontinuity of the homogeneous Orlicz geominimal surface areas}
Let us first establish the semicontinuity of the homogeneous Orlicz
geominimal surface areas.  Recall that for $\phi\in\widehat{\Phi}$ and for $K\in \cK_0$,
$$\widehat{G}_\phi^{orlicz}(K)
=\inf_{L\in\cK_0}\{\widehat{V}_\phi(K,\mathrm{vrad}(L)L^\circ)\}. $$ It is often more convenient, by the bipolar theorem (i.e., $(L^\circ)^\circ=L$ for $L\in \cK_0$) and Corollary \ref{corollary:homogeneous-1}, to formulate $\widehat{G}_\phi^{orlicz}(K)$ for $\phi\in\widehat{\Phi}$  by
\begin{eqnarray}
\label{equi-formula-geominimal-0715} \widehat{G}_\phi^{orlicz}(K)=\inf\{\widehat{V}_\phi(K, L):\ L\in\cK_0
\quad \mathrm{with}\quad |L^\circ|=\omega_n\}.
\end{eqnarray}  Similarly, for $\phi\in \widehat{\Psi}$, \begin{eqnarray} \widehat{G}_\phi^{orlicz}(K)=\sup\{\widehat{V}_\phi(K, L):\ L\in\cK_0
\quad \mathrm{with}\quad |L^\circ|=\omega_n\}.\label{equi-formula-geominimal-0715-1}
\end{eqnarray}

Denote by $r_K$ and $R_K$ the inner and outer radii of convex body $K\in\cK_0$, respectively. That is,  $$
r_K=\min\{h_K(u):\ u\in S^{n-1}\}\quad \mathrm{and} \quad
R_K=\max\{h_K(u):\ u\in S^{n-1}\}.
$$

 \bl\label{l-bounded}
 Let $K, L\in\mathcal{K}_0$. For
$\phi\in\mathcal{I}\cup\mathcal{D}$, one has
\begin{displaymath}
\frac{n\omega_n\cdot r_K^n\cdot r_{L}}{R_K}\leq
\widehat{V}_\phi(K,L)\le \frac{n\omega_n\cdot R_K^n\cdot
R_{L}}{r_K}.
\end{displaymath}
 \el

\begin{proof}
For $\phi \in \mathcal{I}$, let $\lambda=\widehat{V}_\phi(K,L)$. By Corollary \ref{corollary:homogeneous-2} and the
fact that $\phi$ is increasing on $(0, \infty)$, one has
\begin{eqnarray*}
1&=&  {\int}_{S^{n-1}}\phi\left(\frac{n|K|\cdot
h_{L}(u)}{\lambda\cdot h_K(u)}\right)
d\widetilde{V}_K(u)\\
&\leq&   {\int}_{S^{n-1}}\phi\left(\frac{n|K|\cdot
R_{L}}{\lambda\cdot r_K}\right)
d\widetilde{V}_K(u)  \nonumber\\
&\leq& \phi\left(\frac{n\omega_n\cdot R_K^n\cdot R_{L}}{\lambda\cdot
r_K}\right). \nonumber
\end{eqnarray*}
Moreover, as $\phi(1)=1$,  one gets
\begin{equation*}
\widehat{V}_\phi(K,L)=\lambda\le \frac{n \omega_n\cdot R_K^n\cdot R_{L}}{r_K}.
\end{equation*}
For the lower bound,
\begin{eqnarray*}
1&=&  {\int}_{S^{n-1}}\phi\left(\frac{n|K|\cdot
h_{L}(u)}{\lambda\cdot h_K(u)}\right)
d\widetilde{V}_K(u)\\
&\geq&   {\int}_{S^{n-1}}\phi\left(\frac{n|K|\cdot
r_{L}}{\lambda\cdot R_K}\right)
d\widetilde{V}_K(u)  \nonumber\\
&\geq& \phi\left(\frac{n\omega_n\cdot r_K^n\cdot r_{L}}{\lambda\cdot
R_K}\right). \nonumber
\end{eqnarray*} As  $\phi$ is increasing on $(0,
\infty)$ and $\phi(1)=1$,   one gets
\begin{equation*}
\widehat{V}_\phi(K,L)\geq \frac{n\omega_n\cdot r_K^n \cdot r_{L}}{R_K}.
\end{equation*}

The case for $\phi\in \mathcal{D}$ follows along the same lines. \end{proof}

We will often need the following result.
\bl \label{uniform-continuous-convergence} Let $\varphi: I\rightarrow \bbR$ be a uniformly continuous function on an interval $I\subset \bbR$. Let $\{f_i\}_{i\geq 0}$ be a sequence of functions such that $f_i: E\rightarrow I$ for all $i\geq 0$ and $f_i\rightarrow f_0$ uniformly  on $E$ as $i\rightarrow \infty$. Then $\varphi(f_i)\rightarrow \varphi(f_0)$  uniformly on $E$ as $i\rightarrow \infty$.  \el \begin{proof} For any $\epsilon>0$. As $\varphi$ is uniformly continuous, there exists $\delta(\epsilon)>0$ such that $|\varphi(x)-\varphi(y)|<\epsilon$ for all $x, y\in I$ with $|x-y|<\delta(\epsilon)$.   
On the other hand, as $f_i\rightarrow f_0$ uniformly on $E$, there exists an integer $N_0(\epsilon): =N(\delta(\epsilon))>0$ such that
$|f_i(z)-f_0(z)|<\delta(\epsilon)$ for all $i>N_0(\epsilon)$ and all $z\in E$. Hence, $|\varphi(f_i(z))-\varphi(f_0(z))|<\epsilon$ for all $i>N_0(\epsilon)$ and all $z\in E$.  That is, $\varphi(f_i)\rightarrow \varphi(f_0)$ uniformly on $E$.
\end{proof}

\bp\label{p2} Let $\{K_i\}_{i\geq 1}$ and $\{L_i\}_{i\geq 1}$ be two sequences of convex bodies in $\cK_0$ such that  $K_i\rightarrow K\in\mathcal{K}_{0}$ and
$L_i\rightarrow L\in\mathcal{K}_{0}$. For $\phi\in \mathcal{I}\cup \mathcal{D}$, one has $\widehat{V}_\phi(K_i,L_i)\rightarrow
\widehat{V}_\phi(K, L).$  \ep

\begin{proof} As  $K_i\rightarrow K\in\mathcal{K}_{0}$, one can find constants $c_K, C_K>0$, such that, for all $i\geq 1$, \begin{equation}\label{bounded-1-0716} c_K \ball \subset K_i, K \subset C_K\ball. \end{equation}  Similarly, one can find constants $c_L, C_L>0$, such that, for all $i\geq 1$,  \begin{equation}\label{bounded-2-0716}c_L\ball \subset L_i, L \subset C_L\ball. \end{equation}

For simplicity,  let
$\lambda_i=\widehat{V}_\phi(K_i,L_i)$. Lemma \ref{l-bounded}
yields, for all $i\geq 1$, 
\begin{equation}
\frac{n\omega_n\cdot c_K^n\cdot c_L}{C_K}\leq
\lambda_i\le \frac{n\omega_n\cdot C_{K}^n\cdot C_L} {c_K}, \label{uniform-mixed-bound-0715}
\end{equation} and thus   the sequence
$\{\lambda_i\}_{i\geq 1}$ is bounded from both sides. Let $f_i$ and $f$ be given by  $$f_i(u)=\frac{n|K_i|\cdot h_{L_i}(u)}{\lambda_i \cdot h_{K_i}(u)} \ \ \ \mathrm{and}\ \ \ f(u)= \frac{n|K|\cdot h_L(u)}{\lambda_0\cdot h_K(u)}\ \ \ \mathrm{for} \ \ u\in \sphere. $$

On the one hand, suppose that $\{\lambda_{i_k}\}_{k\geq 1}$ is a convergent subsequence of $\{\lambda_i\}_{i\geq 1}$ with limit $\lambda_0$. That is, $\lim_{k\rightarrow \infty} \lambda_{i_k}=\lambda_0$ and then $0<\lambda_0<\infty$. Note that $K_i\rightarrow K\in \cK_0$ yields $h_{K_i}\rightarrow h_{K}$ uniformly on $S^{n-1}$.  Similarly,   $h_{L_i}\rightarrow h_{L}$ uniformly on $S^{n-1}$. Together with (\ref{bounded-1-0716}) and (\ref{bounded-2-0716}), one sees that $f_{i_k} \rightarrow
f$ uniformly on $S^{n-1}.$  Moreover, the ranges of $f_{i_k}, f$ are all in the interval $$I=\bigg[ \frac{c_L}{C_L} \cdot \bigg(\frac{c_K}{C_K}\bigg)^{n+1},\ \     \frac{C_L}{c_L} \cdot \bigg(\frac{C_K}{c_K}\bigg)^{n+1}\bigg].$$ Note that the interval $I\subsetneq (0, \infty)$ is a compact set. Hence $\phi\in\mathcal{I}\cup\mathcal{D}$ restricted on $I$ is uniformly continuous.  Lemma \ref{uniform-continuous-convergence}  implies that $\phi(f_{i_k})\rightarrow \phi(f)$ uniformly on $\sphere$. Moreover, as both $\{\phi(f_{i_k})\}_{k\geq 1}$ and $\{h_{K_i}\}_{i\geq 1}$ are uniformly bounded on $\sphere$, one sees that  $\phi(f_{i_k})h_{K_{i_k}} \rightarrow \phi(f)h_K$ uniformly on $\sphere$. Formula (\ref{weak fact}) then yields
\begin{eqnarray*}
1&=&\lim_{k\rightarrow\infty}  {\int}_{S^{n-1}}\phi\left(\frac{n|K_{i_k}|\cdot
h_{L_{i_k}}(u)}{\lambda_{i_k}\cdot h_{K_{i_k}}(u)}\right)
\, d\widetilde{V}_{K_{i_k}}(u)\\ &=& \lim_{k\rightarrow\infty}  {\int}_{S^{n-1}}   \frac{\phi\left(f_{i_k}(u)\right)h_{K_{i_k}}(u)  }{n |K_{i_k}|} \, dS_{K_{i_k}}(u)  \\ &=&  {\int}_{S^{n-1}}  \frac{\phi\left(f(u)\right)h_{K}(u)}{n |K|} \, dS_{K}(u)\\ &=&{\int}_{S^{n-1}}  {\phi\left(\frac{n|K|\cdot h_L(u)}{\lambda_0\cdot h_K(u)}\right)} \, d\widetilde{V}_{K}(u).
\end{eqnarray*} Therefore $\lambda_0=\widehat{V}_\phi(K, L)$ and $\lim_{k\rightarrow \infty} \widehat{V}_\phi(K_{i_k}, L_{i_k}) =\widehat{V}_\phi(K, L).$ We have proved that if a subsequence of  $\{\widehat{V}_\phi(K_i,L_i)\}_{i\geq 1}$ is convergent, then its limit must be $\widehat{V}_\phi(K, L)$.

To conclude Proposition \ref{p2}, it is enough to claim that the
sequence $\{\widehat{V}_\phi(K_i,L_i)\}_{i\geq 1}$ is indeed convergent.
Suppose that $\{\widehat{V}_\phi(K_i,L_i)\}_{i\geq 1}$ is not
convergent. One has two convergent subsequences whose limits exist  by
(\ref{uniform-mixed-bound-0715}) and are different.  This
contradicts with the arguments in the previous paragraph, and hence
the sequence $\{\widehat{V}_\phi(K_i,L_i)\}_{i\geq 1}$ is
convergent. \end{proof}

The following result states that the homogeneous Orlicz geominimal
surface areas are semicontinuous. For the homogeneous Orlicz affine
surface areas, similar semicontinuous arguments also hold.

\bp\label{semicontinuous-07-17-1----1} For $\phi\in \widehat{\Phi}$, the functional $\affineg(\cdot)$ is upper semicontinuous on $\cK_0$  with respect to the Hausdorff distance. That is, for any convergent sequence $\{K_i\}_{i\geq 1}\subset\cK_0$ whose limit is $K_0\in \cK_0$, then $$\affineg(K_0)\geq \limsup_{i\rightarrow\infty} \affineg(K_i).$$ While for $\phi\in \widehat{\Psi}$, the functional $\affineg(\cdot)$ is lower semicontinuous on $\cK_0$: for any $K_i\rightarrow K_0$, then $$\affineg(K_0)\leq \liminf_{i\rightarrow\infty} \affineg(K_i).$$  \ep
\begin{proof} Let $\phi\in \widehat{\Phi}$. For any given $\epsilon>0$, by formula (\ref{equi-formula-geominimal-0715}), there exists a convex body $L_{\epsilon}\in \cK_0$, such that $|L_{\epsilon}^\circ|=\omega_n$ and $$\affineg(K_0)+\epsilon>\widehat{V}_{\phi}(K_0, L_{\epsilon})\geq \affineg(K_0). $$ By Proposition \ref{p2}, one has
\begin{eqnarray*}  \affineg(K_0)+\epsilon > \widehat{V}_{\phi}(K_0, L_{\epsilon}) = \lim_{i\rightarrow \infty} \widehat{V}_{\phi}(K_i, L_{\epsilon})= \limsup_{i\rightarrow \infty} \widehat{V}_{\phi}(K_i, L_{\epsilon})\geq  \limsup_{i\rightarrow\infty} \affineg(K_i).
\end{eqnarray*}
The desired result follows by letting $\epsilon\rightarrow 0$. The case for $\phi\in \widehat{\Psi}$ can be proved along the same lines. \end{proof}

 \subsection{The Orlicz-Petty bodies: existence and basic properties} \label{The Orlicz-Petty bodies: existence and basic properties}
In this subsection, we will prove the existence of the Orlicz-Petty
bodies under the condition $\phi\in \widehat{\Phi}_1$. The following
lemma is needed for our goal. Denote by $a_+=\frac{a+|a|}{2}$ for $a\in \bbR$. Clearly $a_+=\max\{a, 0\}$.

\bl \label{projection-positive-0715} Let $K\in \cK_0$ and $\phi\in\widehat{\Phi}_1$. For fixed $v\in \sphere$, define  $G_v: (0, \infty)\rightarrow (0, \infty)$ by $$G_v(\eta)=\int_{\sphere}   \phi\left(\frac{n|K|\cdot\langle
u, v\rangle_+}{\eta \cdot h_K(u)}\right)\,d\widetilde{V}_K(u).$$ Then $G_v$  is strictly decreasing, and
\begin{eqnarray*} \lim_{\eta\rightarrow 0} G_v(\eta)=\lim_{t\rightarrow \infty} \phi(t)=\infty \ \ \ \mathrm{and}\ \ \  \lim_{\eta\rightarrow \infty} G_v(\eta)= \lim_{t\rightarrow  0} \phi(t)=0.  \end{eqnarray*} \el

\begin{proof}  Since $K\in \mathcal{K}_0$, (\ref{minkowski-solution-1}) implies that there exists a  constant
$c_1>0$ such that for all $v\in\sphere$,  $$\int_{S^{n-1}}\langle u,v\rangle_+dS_K(u)\geq c_1.$$
For any given $v\in \sphere$, let $\Sigma_j(v)=\{u\in S^{n-1}:\langle u,v\rangle_+ >\frac{1}{j}\}$ for all integers $j\geq 1$.  It is obvious that $\Sigma_j(v)\subset \Sigma_{j+1}(v)$ for all $j\geq 1$ and $\cup_{j=1}^{\infty}\Sigma_j(v)=\{u\in S^{n-1}:\langle u,v\rangle_+ >0\}$. Hence,
$$\lim_{j\rightarrow \infty}\int_{\Sigma_j(v)}\langle u,v\rangle_+ \,dS_K(u)=\int_{\cup_{j=1}^{\infty}\Sigma_j(v)}\langle u,v\rangle_+ \,dS_K(u)=\int_{S^{n-1}}\langle u,v\rangle_+ \,dS_K(u)\geq c_1.
$$ Then, there exists an integer  $j_0\geq 1$ (depending on $v\in \sphere$)
such that \be
\label{sphere integration} \frac{c_1}{2}\leq \int_{\Sigma_{j_0}(v)}\langle u,v\rangle_+ \,
dS_K(u)\leq \int_{\Sigma_{j_0}(v)}\,dS_K(u). \ee

Assume that $\phi\in \widehat{\Phi}_1$ and then $\phi$ is strictly increasing.   Let $0<\eta_1<\eta_2<\infty$. For all  $u\in \Sigma_{j_0} (v)$, one has  $$\phi\left(\frac{n|K|\cdot\langle
u, v\rangle_+}{\eta_2 \cdot h_K(u)}\right)<\phi\left(\frac{n|K|\cdot\langle
u, v\rangle_+}{\eta_1 \cdot h_K(u)}\right),$$ and by (\ref{sphere integration}),  $$\int_{\Sigma_{j_0} (v)}   \phi\left(\frac{n|K|\cdot\langle
u, v\rangle_+}{\eta_2 \cdot h_K(u)}\right)\,d\widetilde{V}_K(u)< \int_{\Sigma_{j_0} (v)}   \phi\left(\frac{n|K|\cdot\langle
u, v\rangle_+}{\eta_1 \cdot h_K(u)}\right)\,d\widetilde{V}_K(u).$$ The desired monotone argument (i.e., $G_v$ is strictly decreasing) follows immediately from
\begin{eqnarray*} G_v(\eta)= \int_{\Sigma_{j_0} (v)}   \phi\left(\frac{n|K|\cdot\langle
u, v\rangle_+}{\eta \cdot h_K(u)}\right)\,d\widetilde{V}_K(u)+ \int_{\sphere\setminus \Sigma_{j_0} (v)}   \phi\left(\frac{n|K|\cdot\langle
u, v\rangle_+}{\eta \cdot h_K(u)}\right)\,d\widetilde{V}_K(u).  \end{eqnarray*}

Now let us prove that  \begin{eqnarray*} \lim_{\eta\rightarrow 0} G_v(\eta)=\lim_{t\rightarrow \infty} \phi(t) =\infty \ \ \ \mathrm{and}\ \ \   \lim_{\eta\rightarrow \infty} G_v(\eta)= \lim_{t\rightarrow  0} \phi(t)=0.  \end{eqnarray*} To this end,  as $\phi\in \widehat{\Phi}_1$ is increasing, \begin{eqnarray*} G_v(\eta)=\int_{\sphere}   \phi\left(\frac{n|K|\cdot\langle
u, v\rangle_+}{\eta \cdot h_K(u)}\right)\,d\widetilde{V}_K(u) \leq \int_{\sphere}   \phi\left(\frac{n|K| }{\eta \cdot r_K}\right)\,d\widetilde{V}_K(u)=\phi\left(\frac{n|K| }{\eta \cdot r_K}\right).\end{eqnarray*} By letting $t=\frac{n|K| }{\eta \cdot r_K}$, one has $0\leq \lim_{\eta\rightarrow \infty} G_v(\eta)\leq \lim_{t\rightarrow  0} \phi(t)=0$ and thus $ \lim_{\eta\rightarrow \infty} G_v(\eta)=0$.   On the other hand, \begin{eqnarray} G_v(\eta)&=&\int_{\sphere}   \phi\left(\frac{n|K|\cdot\langle
u, v\rangle_+}{\eta \cdot h_K(u)}\right)\,d\widetilde{V}_K(u) \nonumber\\  &\geq& \int_{\Sigma_{j_0} (v)}  \phi\left(\frac{n|K|\cdot\langle
u, v\rangle_+}{\eta \cdot h_K(u)}\right)\,d\widetilde{V}_K(u) \nonumber \\ &\geq&  \int_{\Sigma_{j_0} (v)}  \phi\left(\frac{n|K| }{\eta\cdot j_0 \cdot R_K}\right)\cdot\frac{r_K}{n|K|}\cdot \,dS_K(u) \nonumber \\ &\geq &  \phi\left(\frac{n|K| }{\eta\cdot j_0 \cdot R_K}\right)\cdot \frac{r_K}{n|K|}\cdot \frac{c_1}{2}.\label{projection-estimate-0717}\end{eqnarray} The desired result $\lim_{\eta\rightarrow 0} G_v(\eta)= \lim_{t\rightarrow \infty} \phi(t)=\infty$ follows by taking $\eta\rightarrow 0$. \end{proof} A direct consequence of Lemma \ref{projection-positive-0715} is that if $\phi \in \widehat{\Phi}_1$ and $v\in \sphere$, then there is a unique $\eta_0\in (0, \infty)$ such that \begin{eqnarray*} G_v(\eta_0)=\int_{\sphere}   \phi\left(\frac{n|K|\cdot\langle
u, v\rangle_+}{\eta_0 \cdot h_K(u)}\right)\,d\widetilde{V}_K(u)=1.   \end{eqnarray*}   Such a unique $\eta_0$ can be defined as the homogeneous Orlicz $L_{\phi}$ mixed volume of $K\in \cK_0$ and the line segment $[0, v]=\{tv: t\in [0, 1]\}$, namely,  $\eta_0=\widehat{V}_{\phi}(K, [0, v])$ and
\begin{eqnarray}  \int_{\sphere}   \phi\left(\frac{n|K|\cdot\langle
u, v\rangle_+}{\widehat{V}_{\phi}(K, [0, v])\cdot h_K(u)}\right)\,d\widetilde{V}_K(u)=1.  \label{projection-orlicz-0715---1} \end{eqnarray}

\bp \label{p3} Let $K\in\mathcal{K}_{0}$ and $\phi\in\widehat{\Phi}_1$. There exists a convex body
$M\in\mathcal{K}_{0}$ such that
\begin{equation*}
\widehat{G}^{orlicz}_\phi(K)=\widehat{V}_\phi(K, M) \quad
\mathrm{and}\ \quad |M^\circ|=\omega_n.
\end{equation*}
If in addition $\phi$ is convex,  such a convex body $M$ is unique.
\ep

\begin{proof} Formula (\ref{equi-formula-geominimal-0715}) implies that for
$\phi\in\widehat{\Phi}_1$, there exists a sequence
$\{M_i\}_{i\geq 1} \subset \mathcal{K}_{0}$ such that  $\widehat{V}_\phi(K,M_i)  \rightarrow \widehat{G}^{orlicz}_\phi(K)$ as $i\rightarrow \infty$,  $|M_i^\circ|=\omega_n$ and $ \widehat{V}_\phi(K,M_i) \leq 2\widehat{V}_\phi(K,\ball)$  for
all $ i\geq 1$.   For each fixed $i\geq 1$, let $$R_i=\rho_{M_i}(u_i)=\max\{\rho_{M_i}(u):u\in S^{n-1}\}.$$ This
yields $\{\lambda u_i: 0\le\lambda\le R_i\}\subset M_i$ and hence for all $u\in \sphere$,
$$
h_{M_i}(u)\ge R_i\cdot \frac{|\langle u,u_i\rangle|+\langle
u,u_i \rangle}{2}=R_i\cdot\langle u,u_i\rangle_+.
$$

 Let  $\phi\in \widehat{\Phi}_1$ and $\eta_i=\widehat{V}_{\phi}(K, [0, u_i])\in (0, \infty)$ for $i\geq 1$. Recall that formula (\ref{projection-orlicz-0715---1}) states
  \begin{eqnarray*}
1= \int_{S^{n-1}}\phi\left(\frac{n|K|\cdot\langle
u,u_i\rangle_+}{\eta_i\cdot h_K(u)}\right)\,d\widetilde{V}_K(u). \end{eqnarray*}
  By Corollary
\ref{corollary:homogeneous-2} and the fact that $\phi$ is increasing, we have
\begin{eqnarray*}
1&=& \int_{S^{n-1}}\phi\left(\frac{n|K|\cdot h_{M_i}(u)}{\widehat{V}_\phi(K,M_i)\cdot h_K(u)}\right)\,d\widetilde{V}_K(u)\\
&\geq& \int_{S^{n-1}}\phi\left(\frac{n|K|\cdot R_i\cdot\langle
u,u_i\rangle_+}{2\widehat{V}_\phi(K,B_2^n)\cdot
h_K(u)}\right)\, d\widetilde{V}_K(u).
\end{eqnarray*} This further leads to, for all $i\geq 1$,
$$R_i\leq \frac{2 \widehat{V}_\phi(K,B_2^n)}{\eta_i}.$$

Next, we prove that $\inf_{i\geq 1}\eta_i>0$. We will use the method
of contradiction and assume that   $\inf_{i\geq 1} \eta_i= 0$.
Consequently, there is a subsequence of $\{\eta_i\}_{i\geq 1}$ (still denoted by
$\{\eta_i\}_{i\geq 1}$), such that, $\eta_i\rightarrow 0$ as
$i\rightarrow \infty$.  Due to the compactness of $\sphere$, one
can also have a convergent subsequence of $\{u_i\}_{i\geq 1}$ (again denoted by $\{u_i\}_{i\geq 1}$) whose limit is $v\in \sphere$.
In summary, we have two sequences  $\{u_i\}_{i\geq 1}$ and
$\{\eta_i\}_{i\geq 1}$ such that $u_i\rightarrow v$  and
$\eta_i\rightarrow 0$ as $i\rightarrow \infty$.  It is easily
checked that $\langle u,u_i\rangle_+ \rightarrow \langle
u,v\rangle_+$ uniformly on $S^{n-1}$ by the triangle inequality. For any
given $\varepsilon>0$, Corollary \ref{corollary:homogeneous-2},
Fatou's lemma and formula (\ref{sphere integration}) imply
\begin{eqnarray*}
1&=& \lim_{i\rightarrow \infty}\int_{S^{n-1}}\phi\left(\frac{n|K|\cdot \langle u,u_i\rangle_+}{\eta_i\cdot h_K(u)}\right)\dV\\
&\geq& \liminf_{i\rightarrow \infty}\int_{S^{n-1}}\phi\left(\frac{n|K|\cdot \langle u,u_i\rangle_+}{(\eta_i+\varepsilon)\cdot h_K(u)}\right)\dV\\&\geq& \int_{S^{n-1}}\liminf_{i\rightarrow \infty} \phi\left(\frac{n|K|\cdot \langle u,u_i\rangle_+}{(\eta_i+\varepsilon)\cdot h_K(u)}\right)\dV\\
&=& \int_{S^{n-1}}\phi\left(\frac{n|K|\cdot \langle u,v\rangle_+}{\varepsilon\cdot h_K(u)}\right)\dV \\
&=& G_{v}(\varepsilon).
\end{eqnarray*} It follows from Lemma \ref{projection-positive-0715} that $\lim_{\varepsilon\rightarrow 0^+}G_v(\varepsilon)=\infty$, which leads to  a contradiction (i.e., $1\geq \infty$). Therefore, $\inf_{i\geq 1}\eta_i>0$ and
$$
\sup_{i\geq 1} R_i\leq \frac{2\widehat{V}_\phi(K,B_2^n)}{\inf_{i\geq 1}\eta_i}<\infty.
$$ This concludes that the sequence $\{M_i\}_{i\geq 1}\subset \cK_0$  is uniformly bounded.

The Blaschke selection
theorem yields that there exists a convergent subsequence of $\{M_i\}_{i\geq 1}$ (still
denoted by $\{M_i\}_{i\geq 1}$) and a convex body $M\in\cK$ such that
$M_i\rightarrow M$ as $i\rightarrow \infty$. Since $|M_i^\circ|=\omega_n$ for all $i\geq 1$,  Lemma \ref{l1}
implies $M\in\cK_0$. Moreover, $|M^\circ|=\omega_n$ because $|M_i^\circ|=\omega_n$ for all $i\geq 1$ and $M_i\rightarrow M$ (hence, 
$M_i^\circ\rightarrow M^\circ$).  It follows from Proposition \ref{p2} that $$\widehat{V}_\phi(K,M_i)  \rightarrow  \widehat{V}_\phi(K,M) \ \ \ \mathrm{and} \ \ \ |M^\circ|=\omega_n. $$ By the uniqueness of the limit, one gets
$$  \widehat{G}^{orlicz}_\phi(K)=\widehat{V}_\phi(K,M) \ \ \ \mathrm{and} \ \ \ |M^\circ|=\omega_n.$$
This concludes the existence of the Orlicz-Petty bodies.

  If $\phi\in \widehat{\Phi}_1$ is also convex, the
uniqueness of $M$ can be proved as follows. Suppose that ${M}_1, {M}_2 \in \cK_0$
such that $|M_1^\circ|=|{M}_2 ^\circ|=\omega_n$ and
$$\widehat{V}_\phi(K, M_1)=\inf_{L\in\cK_0}\{\widehat{V}_\phi(K,\mathrm{vrad}(L^\circ)L)\}=\widehat{V}_\phi(K, {M}_2).$$
Define $M\in\cK_0$ by $M=\frac{M_1+M_2}{2}.
$ That is, $h_M=\frac{h_{M_1}+h_{M_2}}{2}$.
By formula (\ref{formula for volume}), it  can be checked  that
$|M^\circ|\leq\omega_n$ (hence $\mathrm{vrad}(M^\circ)\leq 1$) with equality if and only if $M_1=M_2$. In fact, the function $t^{-n}$ is strictly convex, and hence \begin{eqnarray} |M^\circ| &=&\frac{1}{n} \int _{\sphere} h_M(u)^{-n}\,d\sigma(u) \nonumber \\ &=& \frac{1}{n} \int _{\sphere}  \bigg(\frac{h_{M_1}(u)+h_{M_2}(u)}{2}\bigg)^{-n} \,d\sigma(u) \nonumber \\ &\leq& \frac{1}{n} \int _{\sphere}  \frac{h_{M_1}(u)^{-n}+h_{M_2}(u)^{-n}}{2} \,d\sigma(u) \nonumber\\ &=& \frac{|M_1^\circ|+|M_2^\circ|}{2} =\omega_n,  \label{mean-body-volume} \end{eqnarray} with equality  if and only if $h_{M_1}=h_{M_2}$ on $\sphere$, i.e., $M_1=M_2$.

For convenience, let $\lambda=\widehat{V}_\phi(K,M_1)=\widehat{V}_\phi(K,M_2)$. The fact that $\phi$ is convex imply \begin{eqnarray*}
\int_{S^{n-1}}\phi\left(\frac{n|K|\cdot h_M(u)}{\lambda\cdot h_K(u)}\right)\dV\! \!&=&\!\! \int_{S^{n-1}}\phi\left(\frac{n|K|\cdot (h_{M_1}(u)+h_{M_2}(u))}{2\cdot \lambda\cdot h_K(u)}\right)\dV\\ \!\! &\leq&\!\! \frac{1}{2}\int_{S^{n-1}}\bigg[\phi\left(\frac{n|K| \cdot h_{M_1}(u)}{\lambda \cdot h_K(u)}\right)+\phi\left(\frac{n|K| \cdot h_{M_2}(u)}{\lambda\cdot h_K(u)}\right)\bigg]\dV\\
\!\!&=&\!\! 1. \end{eqnarray*}
Hence, $\widehat{V}_\phi(K,M)\leq \lambda$ which follows from the facts that $\phi$ is strictly increasing and
\begin{eqnarray*}
\int_{S^{n-1}}\phi\left(\frac{n|K|\cdot h_M(u)}{\lambda\cdot h_K(u)}\right)\dV \leq 1
= \int_{S^{n-1}}\phi\left(\frac{n|K|\cdot h_M(u)}{\widehat{V}_\phi(K, M) \cdot h_K(u)}\right)\dV.
\end{eqnarray*}

Assume that $M_1\neq M_2$, then $\mathrm{vrad}(M^\circ)<1$. Note that $\widehat{V}_\phi(K,M)>0.$ Together with  Corollary \ref{corollary:homogeneous-1}, one can check that $$\widehat{V}_\phi(K,\mathrm{vrad}(M^\circ)M)<\widehat{V}_\phi(K,M)\leq \widehat{V}_\phi(K,M_1).$$ This contradicts with the minimality of $M_1$. Therefore, $M_1=M_2$ and the uniqueness follows.
\end{proof}

\bd \label{orlicz-petty body} Let $K\in\cK_{0}$ and
$\phi\in\widehat{\Phi}_1$. A convex body $M$ is said to be an $L_{\phi}$
Orlicz-Petty body of $K$, if $M\in \cK_0$ satisfies  $$
\widehat{G}^{orlicz}_\phi(K)=\widehat{V}_\phi(K,M)  \quad
\mathrm{and} \quad |M^\circ|=\omega_n. $$ Denote by
$\widehat{T}_\phi K$ the set of all $L_{\phi}$ Orlicz-Petty bodies
of $K$.  \ed

Clearly, if $\phi\in \widehat{\Phi}_1$, the set $\widehat{T}_\phi K
$ is nonempty and may contain more than one convex body. If in
addition $\phi\in \widehat{\Phi}_1$ is convex, $\widehat{T}_\phi K $
must contain only one convex body; and in this case $\widehat{T}_{\phi}K$ is called the $L_\phi$ Orlicz-Petty body of $K$. Moreover, the set
$\widehat{T}_{\phi} K$ is $SL(n)$-invariant. In fact, for $A\in
SL(n)$ and all $M\in \widehat{T}_\phi K$, by Proposition
\ref{affine-invariance:proposition} and  formula (\ref{transform}),
one sees
$$\affineg(AK)=\affineg(K)=\widehat{V}_\phi(K,M)=\widehat{V}_\phi(AK,AM).$$
It follows from $|(AM)^\circ|=\omega_n$ that $AM\in \widehat{T}_\phi
(AK)$ and thus $A(\widehat{T}_\phi K)\subset \widehat{T}_\phi (AK).$
Replacing $K$ by $AK$ and  $A$ by its inverse, one also gets
$\widehat{T}_\phi (AK)\subset A (\widehat{T}_\phi K)$ and thus 
$\widehat{T}_\phi (AK)=A(\widehat{T}_\phi K)$.

On the other hand, $\widehat{T}_{\phi}(\lambda
K)=\widehat{T}_{\phi}K$ for all $\lambda>0$. To this end, for $M\in
\widehat{T}_{\phi} K$, one has $|M^\circ|=\omega_n$ and
$\affineg(K)=\widehat{V}_\phi(K,M).$  This leads to, by Corollary
\ref{corollary:homogeneous-1} and Proposition
\ref{affine-invariance:proposition}, $$\affineg(\lambda
K)=\lambda^{n-1} \affineg(K) =\lambda^{n-1}\widehat{V}_\phi(K,
M)=\widehat{V}_\phi(\lambda K, M).$$ Thus, $M\in
\widehat{T}_{\phi}(\lambda K)$ and then $\widehat{T}_{\phi} K\subset
\widehat{T}_{\phi}(\lambda K)$. Similarly,
$\widehat{T}_{\phi}(\lambda K)\subset \widehat{T}_{\phi} K$ and thus
$\widehat{T}_{\phi}(\lambda K)=\widehat{T}_{\phi}K$.

When $\phi\in \widehat{\Phi}_1$ is convex, the $L_{\phi}$ Orlicz-Petty body $\widehat{T}_{\phi}K$ satisfies the following inequality: for all $K\in \cK_0$, one has \begin{eqnarray}
|\widehat{T}_\phi K|\cdot |(\widehat{T}_\phi K)^\circ| \leq |K|\cdot |K^\circ|. \label{Mahler-product-0717--1} \end{eqnarray} In fact, it follows from (\ref{comparision-1}) that  for $K\in\cK_0$, $
\widehat{G}^{orlicz}_\phi(K)\le
n|K|\cdot \vrad(\polar).$  Definition \ref{orlicz-petty body}
and the Orlicz-Minkowski inequality  (\ref{Minkowski-inequality-2016-713}) imply that
$$\widehat{G}^{orlicz}_\phi(K)=\widehat{V}_\phi(K,\widehat{T}_\phi K) \geq  n\cdot |K|^{\frac{n-1}{n}} |\widehat{T}_\phi
K|^\frac{1}{n}.
$$
The desired inequality (\ref{Mahler-product-0717--1}) is then a
simple consequence of the combination of the two inequalities above
and $|(\widehat{T}_\phi K)^\circ|=\omega_n$. Note that it is an open
problem (i.e., the famous Mahler conjecture)  to find the minimum of
$|K|\cdot|\polar|$ among all convex bodies $K\in \widetilde{\cK}$.
The inverse Santal\'{o} inequality (\ref{inverse-0717--0011})
provides an isomorphic solution to the Mahler conjecture. We think
that the $L_{\phi}$ Orlicz-Petty body $\widehat{T}_{\phi}K$ and
inequality (\ref{Mahler-product-0717--1}) may be useful in attacking
the Mahler conjecture.

The following proposition states that an $L_\phi$ Orlicz-Petty body of a polytope is again a polytope.

 \bp Let $K\in\cK_0$ be a polytope and $\phi \in \widehat{\Phi}_1$.  If $M\in \widehat{T}_\phi
K$, then $M$ is a polytope with faces parallel to those of $K$.   \ep

\begin{proof} Let  $K$ be a polytope whose surface area measure $S_K$ is concentrated on a finite set $\{u_1, \cdots, u_m\}\subset \sphere$.  Let $M\in \widehat{T}_{\phi}K$ be an $L_{\phi}$ Orlicz-Petty body of $K$. Denote by $P$ the polytope whose faces are parallel to those of $K$ and $P$ circumscribes $M$.

Note that  $S_K$ is concentrated on $\{u_1, \cdots, u_m\}$ and $h_P(u_i)=h_{M}(u_i)$ for all $1\leq i\leq m$.  Let $\lambda=\widehat{V}_\phi(K,P)$. Then
\begin{eqnarray*}
1&=&\int_{S^{n-1}}\phi\left(\frac{n|K|\cdot h_P(u)}{\lambda \cdot h_K(u)}\right)\dV \\ &=& \frac{1}{n|K|} \cdot  \sum_{i=1}^m  \phi\left(\frac{n|K|\cdot h_P(u_i)}{\lambda \cdot h_K(u_i)}\right)h_K(u_i) S_K(u_i) \\ &=& \frac{1}{n|K|}\cdot  \sum_{i=1}^m  \phi\left(\frac{n|K|\cdot h_M(u_i)}{\lambda \cdot h_K(u_i)}\right)h_K(u_i) S_K(u_i) \\&=&\int_{S^{n-1}}\phi\left(\frac{n|K|\cdot h_{M}(u)}{\lambda\cdot h_K(u)}\right)\dV. \end{eqnarray*} Consequently, $\lambda=\widehat{V}_\phi(K, P)=\widehat{V}_\phi(K, M)$.

As $P$ circumscribes $M$, then $P^\circ\subset M^\circ$ and $|P^\circ|\leq |M^\circ |=\omega_n$ with equality if and only if $M=P$.  Formula (\ref{affine surface area hat}) and Corollary \ref{corollary:homogeneous-1} yield that  for $\phi\in \widehat{\Phi}_1$,
$$ \affineg(K)\leq \widehat{V}_\phi(K,\mathrm{vrad}(P^\circ)P)\leq \widehat{V}_\phi(K, M)=\affineg(K).$$  This requires in particular $|P^\circ|= |M^\circ |=\omega_n$. Hence $M=P$ is a polytope whose faces are parallel to those of $K$.
\end{proof}

\bp \label{l2} Let $K\in\mathcal{K}_{0}$ and $r_K ,R_K>0$ be such
that $r_K \ball\subset K\subset R_K\ball$. For $\phi\in\widehat{\Phi}_1$ and $M\in \widehat{T}_{\phi}K$, there exists an integer $j_0>1$ such
that, for all $u\in \sphere$,
\begin{eqnarray*}
h_M(u)\le
\frac{j_0\cdot R_K^{n+1}}{r_K^{n+1}}\cdot \phi^{-1}\left(\frac{2n\omega_n\cdot R_K^n}{c_1\cdot
r_K}\right),\end{eqnarray*} where $c_1>0$ is the constant in (\ref{sphere integration}).
\ep

\begin{proof} Let $M\in \widehat{T}_{\phi}K$. First of all, the minimality of $M$ gives that
$$\widehat{V}_\phi(K,M)\le \widehat{V}_\phi(K,\ball)\le
 \frac{n\omega_n\cdot R_K^n} {r_K},$$ where the second inequality follows from Lemma \ref{l-bounded}. Let $\lambda=\widehat{V}_\phi(K, M)$ and $R(M)=\rho_{M}(v)=\max\{\rho_{M}(u):u\in
S^{n-1}\}$. A calculation similar to (\ref{projection-estimate-0717}) leads to
\begin{eqnarray*}
1&=& \int_{S^{n-1}}\phi\left(\frac{n|K|\cdot
h_{M}(u)}{\lambda\cdot h_K(u)}\right)d\widetilde{V}_K(u)\\
&\ge& \int_{S^{n-1}}\phi\left(\frac{n|K|\cdot R(M)\cdot
\langle u, v\rangle_+}
{\lambda \cdot R_K}\right)\frac{r_K}{n|K|}dS_{K}(u)\\
&\ge& \int_{\Sigma_{j_0}(v)} \phi\left(\frac{n|K|\cdot R(M)}{\lambda\cdot j_0\cdot
R_K}\right) \frac{r_K}{n|K|}dS_{K}(u)\\
&\ge & \phi\left(\frac{n|K|\cdot R(M)}{\lambda\cdot j_0\cdot
R_K}\right)\frac{r_K \cdot c_1}{2n|K|}.
\end{eqnarray*} By the facts that $\phi(1)=1$ and $\phi$ is increasing, one has
$$
R(M)\le \frac{\lambda\cdot j_0\cdot
R_K}{n|K|}\cdot \phi^{-1}\left(\frac{2n|K|}{c_1 \cdot r_K}\right)\le
\frac{j_0\cdot R_K^{n+1}}{r_K^{n+1}}\cdot \phi^{-1}\left(\frac{2n\omega_n \cdot R_K^n}{c_1\cdot
r_K}\right). 
$$ This completes the proof. \end{proof}

\subsection{Continuity of the homogeneous Orlicz geominimal surface areas} \label{Continuity of the homogeneous Orlicz geominimal surface area}
This subsection is dedicated to prove the continuity of the
homogeneous Orlicz geominimal surface areas under the condition
$\phi\in \widehat{\Phi}_1$. The following uniform boundedness
argument is needed.

\bl \label{l3} Let $\{K_\alpha \}_{\alpha \in \Lambda}\subset\cK_0$ be a family of convex bodies satisfying the uniformly bounded property: there exist constants $r, R>0$  such that $ r\ball\subset K_\alpha\subset
R\ball$ for all $\alpha \in \Lambda$. For $\phi\in\widehat{\Phi}_1$ and for any $M_{\alpha}\in \widehat{T}_{\phi}(K_{\alpha})$, there exist constants $r',R'>0$ such
that $$r'\ball \subset M_{\alpha} \subset R'\ball  \ \ \ \ \mathrm{for \ all}\ \ \ \alpha\in \Lambda.$$
\el

\begin{proof} We only need to prove the case that $\{K_{\alpha}\}_{\alpha\in \Lambda}$ contains infinite many different convex bodies, as otherwise the argument is trivial. 

Let $M_\alpha\in \widehat{T}_{\phi}(K_{\alpha})$. First, we prove the existence of $R'$ by contradiction. To this end, we assume that  there is no constant $R'$ such that $M_{\alpha} \subset R'\ball$ for all $\alpha\in \Lambda$. In other words, there is a sequence of $\{M_{\alpha}\}_{\alpha\in \Lambda}$, denoted by $\{M_i\}_{i\geq 1}$, such that $R(M_i)\rightarrow \infty$. Hereafter, for all $i\geq 1$,  $$R(M_i)=\rho_{M_i}(u_i)=\max\{\rho_{M_i}(u):u\in S^{n-1}\}.$$ Similar to the proof of Proposition \ref{p3}, one can find a subsequence, which will not be relabeled, such that, $u_i\rightarrow v\in \sphere$ (due to the compactness of $\sphere$), $R(M_i)\rightarrow \infty$ and $K_i\rightarrow K$ (by the Blaschke selection
theorem due to the uniform boundedness of $\{K_{\alpha}\}_{\alpha\in \Lambda}$) as $i\rightarrow \infty$.

 It follows from Proposition \ref{p2} that
$\widehat{V}_\phi(K_i,\ball)\rightarrow \widehat{V}_\phi(K,\ball)$
as $i\rightarrow \infty$. This implies the boundedness of the
sequence $\{\widehat{V}_\phi(K_i,\ball)\}_{i\geq 1}$ and hence
$$\lambda_i=\frac{\widehat{V}_\phi(K_i,\ball)}{R(M_i)} \rightarrow
0\ \ \ \ \mathrm{as}\ \ \ \ i\rightarrow \infty.$$  Let
$\varepsilon>0$ be given. The triangle inequality yields the uniform
convergence of $\langle u,u_i \rangle_+\rightarrow \langle u,v
\rangle_+ $ on $\sphere$ as $i\rightarrow \infty$. Moreover, as
$K_i\rightarrow K$, one sees $r\ball \subset K\subset R\ball$ and
\be\nonumber 0\leq \frac{n|K_i|\cdot \langle u, u_i
\rangle_+}{(\lambda_i+\varepsilon)\cdot h_{K_i}(u)}\leq
\frac{n\omega_n\cdot R^n}{\varepsilon \cdot r} \  \ \ \
\mathrm{and}\ \ \ 0\leq \frac{n|K|\cdot \langle u, v
\rangle_+}{\varepsilon\cdot h_K(u)}\leq \frac{n\omega_n\cdot
R^n}{\varepsilon \cdot r}. \ \
 \ee A simple calculation yields that
\begin{eqnarray*}
 \frac{n|K_i|\cdot \langle u,u_i \rangle_+}{(\lambda_i+\varepsilon)
 \cdot h_{K_i}(u)}\rightarrow \frac{n|K|\cdot \langle u,v\rangle_+}
 {\varepsilon\cdot h_K(u)}  \ \ \ \ \ \mathrm{uniformly \ on} \ \ \sphere
  \ \ \mathrm{as}\ \ i\rightarrow \infty.
\end{eqnarray*}
Let $I=[0, n\omega_n R^n \varepsilon^{-1} r^{-1}]$ and then $\phi\in \widehat{\Phi}_1$ is uniformly continuous on $I$. By Lemma \ref{uniform-continuous-convergence}, one gets
\begin{equation}\label{uniformly convergence}
\phi\left(\frac{n|K_i|\cdot \langle u,u_i \rangle_+}{(\lambda_i+\varepsilon)\cdot h_{K_i}(u)}\right)\rightarrow \phi\left(\frac{n|K|\cdot \langle u,v\rangle_+}{\varepsilon\cdot h_K(u)}\right) \ \ \ \mbox{uniformly \ on}\ \ \sphere \ \ \ \mbox{as}\  i\rightarrow \infty.
\end{equation}  Note that $\phi\in \widehat{\Phi}_1$ is increasing. By Corollary
\ref{corollary:homogeneous-2},  (\ref{weak fact}), (\ref{sphere integration}) and
(\ref{uniformly convergence}), a calculation similar to (\ref{projection-estimate-0717}) leads to, for any given $\varepsilon>0$,
\begin{eqnarray*}
1&=& \lim_{i\rightarrow \infty}\int_{S^{n-1}}\phi\left(\frac{n|K_i|\cdot h_{M_i}(u)}{\widehat{V}_\phi(K_i, M_i)\cdot h_{K_i}(u)}\right)d\widetilde{V}_{K_i}(u)\\
&\geq& \lim_{i\rightarrow \infty}\int_{S^{n-1}}\phi\left(\frac{n|K_i|\cdot R(M_i) \cdot \langle u,u_i\rangle_+}{\widehat{V}_\phi(K_i,\ball)\cdot h_{K_i}(u)}\right)d\widetilde{V}_{K_i}(u)\\
&\geq&
 \lim_{i\rightarrow \infty}\int_{S^{n-1}}  \phi\left(\frac{n|K_i|\cdot \langle u,u_i\rangle_+}{(\lambda_i+\varepsilon) \cdot h_{K_i}(u)}\right)d\widetilde{V}_{K_i}(u)\\
&=&\int_{S^{n-1}}\phi\left(\frac{n|K| \cdot \langle u,v\rangle_+}{\varepsilon\cdot h_{K}(u)}\right)\dV\\
 &=& G_v(\varepsilon).
\end{eqnarray*}
It follows from Lemma \ref{projection-positive-0715} that $\lim_{\varepsilon\rightarrow 0^+}G_v(\varepsilon)=\infty$, which leads to  a contradiction (i.e., $1\geq \infty$). Thus $R(M_i)\rightarrow \infty$ is impossible. This concludes the existence of $R'$ such that $M_{\alpha} \subset R'\ball$ for  all $\alpha\in \Lambda.$ In other words,  $\{M_{\alpha}\}_{\alpha\in\Lambda}\subset \cK_0$ is uniformly bounded.

\indent Next, we show the existence of $r'>0$ such that $r'\ball \subset M_{\alpha}$ for  all $\alpha\in \Lambda.$ Assume that there is no such a constant $r'>0$.  In other words,  there is a sequence $\{M_j\}_{j\geq 1}$ such that  $w_j\rightarrow w\in \sphere$  (due to the compactness of $\sphere$) and  $r_j\rightarrow 0$ as $j\rightarrow \infty$, where
 $$r_j=h_{M_j}(w_j)=\min\{h_{M_j}(u): u\in S^{n-1}\}. $$ Note that the sequence  $\{M_j\}_{j\geq 1}\subset \cK_0$ is uniformly
 bounded (as proved above). The Blaschke selection theorem, Lemma \ref{l1} and $|M_j^\circ|=\omega_n$ for all $j\geq 1$ imply that there exists a subsequence of
 $\{M_j\}_{j\geq 1}$, which will not be relabeled, and a convex body $M\in \cK_0$, such that, $
 M_j \rightarrow M$ as $j\rightarrow \infty$.    That is,
 $$ \lim_{j\rightarrow\infty}\sup_{u\in \sphere} |h_{M_j}(u)-h_M(u)|=0.$$ This further implies, as $w_j\rightarrow w$,   $$h_M(w)=\lim_{j\rightarrow\infty}h_{M_j}(w_j)=\lim_{j\rightarrow\infty}r_j=0.$$ This contradicts with the positivity of the support function of $M$. Hence, there is a constant  $r'>0$ such that $r'\ball \subset M_{\alpha}$ for  all $\alpha\in \Lambda$.
\end{proof}

Now let us prove our main result which states that the homogeneous Orlicz geominimal surface areas are continuous on $\cK_0$ with respect to the Hausdorff distance.

\bt\label{continuity-0717-1}  For $\phi\in\widehat{\Phi}_1$, the functional $\widehat{G}^{orilcz}_\phi(\cdot)$ on $\cK_0$ is continuous with respect to the Hausdorff distance. In particular, the $L_p$ geominimal surface surface area for $p\in (0, \infty)$ is continuous on $\cK_0$ with respect to the Hausdorff distance. \et

\bpf The upper semicontinuity has been proved in Proposition \ref{semicontinuous-07-17-1----1}. To get the continuity, it is enough to prove that the homogeneous Orlicz geominimal surface areas are lower
semicontinuous on $\mathcal{K}_0$.  To this end, let
$\{K_i\}_{i\geq 1} \subset \cK_0$ be a convergent sequence whose limit is $K_0\in\cK_0$. Let $M_i\in \widehat{T}_{\phi}(K_i)$ for $i\geq 1$. Clearly, $\{K_i\}_{i\geq 0}$ satisfies the uniformly bounded condition in Lemma  \ref{l3}, which implies the uniform boundedness of the sequence
$\{M_i\}_{i\geq 1}$.

Let $l=\liminf_{i\rightarrow \infty} \affineg(K_i).$ Consequently, one can find a subsequence $\{K_{i_k}\}_{k\geq 1}$ such that $l=\lim_{k\rightarrow \infty} \affineg(K_{i_k}).$
By the Blaschke selection
theorem and Lemma \ref{l1}, there exists a subsequence of $\{M_{i_k}\}_{k\geq 1}$ (still
denoted by $\{M_{i_k}\}_{k\geq 1}$) and a body
$M\in\cK_0$, such that,  $M_{i_k}\rightarrow M$ as $k\rightarrow \infty$ and
$|M^\circ|=\omega_n$. Proposition \ref{p2} then yields $$
\widehat{G}^{orlicz}_\phi(K_{i_k})=\widehat{V}_\phi(K_{i_k}, M_{i_k})\rightarrow \widehat{V}_\phi(K_0, M) \ \ \ \mathrm{as} \ \ \ k\rightarrow \infty.$$ It follows from (\ref{equi-formula-geominimal-0715}) that   $$
\widehat{G}^{orlicz}_\phi(K_0)\leq \widehat{V}_\phi(K_0, M)=\lim_{k\rightarrow\infty} \widehat{G}^{orlicz}_\phi(K_{i_k})=\liminf_{i\rightarrow \infty} \widehat{G}^{orlicz}_\phi(K_i).
$$
This completes the proof. \epf

Proposition \ref{p3} states that if $\phi\in\widehat{\Phi}_1$ is convex, the $L_{\phi}$ Orlicz-Petty body is unique. In this case, $\widehat{T}_\phi K$ contains only one element. Consequently, $\widehat{T}_{\phi}: \cK_0\rightarrow \cK_0$ defines an operator.  The following result states that the operator $\widehat{T}_{\phi}$ is continuous.

\bp \label{continuity of petty bodies}  Let $\phi\in\widehat{\Phi}_1$ be convex. Then $\widehat{T}_\phi:
 \cK_0\mapsto \cK_0$ is continuous with respect to the Hausdorff distance.\ep

 \bpf It is enough to prove that $\{\widehat{T}_{\phi}K_i\}_{i\geq 1}\subset \cK_0$ is convergent to $\widehat{T}_{\phi}K_0\in \cK_0$ for every convergent sequence $\{K_i\}_{i\geq 1}\subset \cK_0$ with limit $K_0\in \cK_0$,  in particular, every subsequence of $\{\widehat{T}_\phi K_i\}_{i\geq 1}$ has a convergent subsequence whose limit is $\widehat{T}_{\phi}K_0$.

 Let $\{K_{i_k}\}_{k\geq 1}$ be any subsequence of $\{K_i\}_{i\geq 1}$. Of course, $K_{i_k}\rightarrow K_0$ as $k\rightarrow \infty$ and $\{\widehat{T}_{\phi} K_{i_k}\}_{k\geq 1}$ is uniformly bounded by Lemma \ref{l3}.  Following the Blaschke selection theorem, one can find
 a subsequence of $\{\widehat{T}_\phi K_{i_k}\}_{k\geq 1}$, which will not be relabeled,  and $M\in\cK_0$ such that $\widehat{T}_\phi K_{i_k}\rightarrow M$ as $k\rightarrow \infty$  and
 $|M^\circ|=\omega_n$. By Proposition \ref{p2}, one has
 $$
 \widehat{G}^{orlicz}_\phi(K_{i_k})=\widehat{V}_\phi(K_{i_k}, \widehat{T}_\phi K_{i_k})\rightarrow
 \widehat{V}_\phi(K_0,M) \ \ \ \mathrm{as}\ \ \ k\rightarrow \infty. $$
By Theorem \ref{continuity-0717-1}, one has
 $$ \widehat{G}^{orlicz}_\phi(K_{i_k})\rightarrow \widehat{G}^{orlicz}_\phi(K_0)
 =\widehat{V}_\phi(K_0, \widehat{T}_\phi K_0) \ \ \ \mathrm{as}\ \ \ k\rightarrow \infty.$$
Hence, $\widehat{V}_\phi(K_0, \widehat{T}_\phi
K_0)=\widehat{V}_\phi(K_0,M)$ and then $\widehat{T}_\phi K_0=M$ by the
uniqueness of the $L_\phi$ Orlicz-Petty body for $\phi\in \widehat{\Phi}_1$ being convex.
 \epf

\section{The nonhomogeneous Orlicz geominimal surface areas}\label{section-nonhomogeneous-gemo}

 In this section, we will briefly discuss the continuity of the
 nonhomogeneous Orlicz geominimal surface areas defined 
 in \cite{Ye2015b}. In particular, we prove the existence, uniqueness
 and affine invariance  for the $L_\varphi$ Orlicz-Petty bodies in Subsection
  \ref{section-nonhomogeneous-petty}. In Subsection \ref{section-geom-inter}, we provide  a 
  geometric interpretation for the nonhomogeneous Orlicz $L_{\varphi}$ mixed
  volume with $\varphi\in \mathcal{I}\cup\mathcal{D}$ (in particular, for $\varphi(t)=t^p$ with $p<1$).

\subsection{The geometric interpretation for the nonhomogeneous Orlicz $L_{\varphi}$ mixed volume}\label{section-geom-inter}

 For any continuous function $\varphi: (0, \infty)\rightarrow (0, \infty)$, $V_{\varphi}(K, L)$ denotes the nonhomogeneous Orlicz $L_{\varphi}$ mixed volume of $K$ and $L$. It has the following integral expression:  \begin{equation}\label{definition-mixed-volume-2016-07-17-1} V_{\varphi}(K, L)= \frac{1}{n} \int_{S^{n-1}}
\varphi\bigg(\frac{h_L(u)}{h_K(u)}\bigg)h_K(u)dS_K(u).\end{equation}
  We can use the following examples to see that $V_{\varphi}(\cdot, \cdot)$ is
not homogeneous:$$V_{\varphi}(r\ball, \ball)=\varphi(1/r)\cdot
r^n\cdot\omega_n \ \ \ \mathrm{and} \ \ \ V_{\varphi}(\ball,
r\ball)=\varphi(r)\cdot \omega_n.$$ The geometric interpretation of
$V_{\varphi}(\cdot, \cdot)$ for  convex  $\varphi\in \mathcal{I}$
was given in \cite{Gardner2014, XJL}. However, there are no
geometric interpretations of $V_{\varphi}(\cdot, \cdot)$ for
non-convex functions $\varphi$ (even if $\varphi(t)=t^p$ for $p<1$).
In this subsection, we will provide such a geometric interpretation
for all $\varphi\in \mathcal{I}\cup\mathcal{D}$.

 Denote by $C^+(S^{n-1})$ the set of all positive continuous functions on
$S^{n-1}$. Define $K_f$, the Aleksandrov body associated with $f\in
C^+(S^{n-1})$, by
$$K_f=\cap_{u\in S^{n-1}} H^-(u,f(u)),$$
where $H^-(u,\alpha)$ is the half space with normal vector $u$ and constant $\alpha>0$:
$$H^-(u,\alpha)=\{x\in\mathbb{R}^n:  \langle x,u\rangle\le \alpha\}.$$ This implies that  $$
K_f=\{x\in\mathbb{R}^n:   \langle x,u\rangle\le f(u) \quad\text{for
all}\quad u\in S^{n-1}\}.
$$  Equivalently, $K_f$ is the (unique) maximal element (with
respect to set inclusion) of the set $$
\{K\in\mathcal{K}_0: h_K(u)\le f(u) \quad\text{for all}\quad u\in
S^{n-1}\}. $$  When $f=h_L$ for some convex body $L\in \cK_0$, one sees $K_f=L$.

 For $K\in \mathcal{K}_0$ and
$f\in C^+(S^{n-1})$, the $L_1$ mixed volume of $K$ and $f$, denoted by $V_1(K, f)$, can be formulated by
$$V_1(K, f)=\frac{1}{n}\int_{S^{n-1}}f(u)dS_K(u).$$ When $f$ is the support function of a convex body $L$, then $V_1(K, f)$ is just the usual $L_1$ mixed volume of $K$ and $L$ (i.e., $\varphi(t)=t$ in formula (\ref{definition-mixed-volume-2016-07-17-1})). In particular, $V_1(K, h_K)=|K|$ for all $K\in \cK_0$. Lemma 3.1 in \cite{Lut1993} states that \begin{eqnarray} |K_f|=V_1(K_f, f). \label{Alexander-0718-1} \end{eqnarray}

In order to prove the geometric interpretation for $V_{\varphi}(\cdot, \cdot)$, the linear Orlicz addition of functions \cite{HouYe2016} is needed.
A special case is given below. 
\bd Assume that either $\varphi_1, \varphi_2\in \mathcal{I}$ or $\varphi_1, \varphi_2\in \mathcal{D}$. For $\varepsilon>0$, define $p_1+_{\varphi,\varepsilon} p_2$, the linear Orlicz addition of positive functions $p_1, p_2$ (on whatever common domain),  by
\begin{equation*}
\varphi_1\left(\frac{p_1(x)}{(p_1+_{\varphi,\varepsilon}
p_2)(x)}\right)+\varepsilon\varphi_2
\left(\frac{p_2(x)}{(p_1+_{\varphi,\varepsilon} p_2)(x)}\right)=1.
\end{equation*} \ed

For our context, $p_1=h_K$ and $p_2=h_L$ where $K, L\in\cK_0$ are
two convex bodies. Namely we let $f_{\varepsilon}=h_K+_{\varphi,
\varepsilon} h_L$ and then for any $u\in\sphere$, \be \label{orlicz
equation}
\varphi_1\left(\frac{h_K(u)}{f_{\varepsilon}(u)}\right)+\varepsilon\varphi_2\left(\frac{h_L(u)}{f_{\varepsilon}(u)}\right)=1.
\ee When $\varphi_1,\varphi_2\in \mathcal{I}$ are convex functions,
$f_\varepsilon=h_K+_{\varphi, \varepsilon} h_L$ is the support
function of a convex body (see \cite{Gardner2014, XJL}). Clearly,
$f_\varepsilon\in C^+(\sphere)$ determines an Aleksandrov body
$K_{f_\varepsilon}$, which will be written as  $K_\varepsilon$  for
simplicity.  Moreover,   $h_K\leq f_\varepsilon$ if
$\varphi_1,\varphi_2\in \mathcal{I}$ and $h_K\geq f_\varepsilon$ if
$\varphi_1,\varphi_2\in \mathcal{D}$.

Let $(\varphi_1)'_l(1)$ and $(\varphi_1)'_r(1)$ stand for the left
and the right derivatives of $\varphi_1$ at $t=1$, respectively, if
they exist. From the proof of Theorem 9 in \cite{HouYe2016}, one
sees that $f_{\varepsilon}\rightarrow h_K$ uniformly on $\sphere$ as
$\varepsilon\rightarrow 0^+$.  Following similar arguments in
\cite{Gardner2014, ghwy14, HouYe2016, Zhu2015}, we can prove the following
result.

\bl \label{convergence 2-0718-1} Let $K, L\in\mathcal{K}_0$ and $\varphi_1,\varphi_2\in \mathcal{I}$ be such that $(\varphi_1)'_l(1)$ exists and is positive. Then \begin{eqnarray}
{(\varphi_1)'_l(1)} \lim_{\varepsilon\rightarrow
0^+}\frac{f_\varepsilon(u)-h_K(u)}{\varepsilon}
&=&{h_K(u)}\cdot \varphi_2\left(\frac{h_L(u)}{h_K(u)}\right) \ \ \ \mathrm{uniformly\ on}\ \  S^{n-1}. \label{convergence of f-e}
\end{eqnarray}
 For $\varphi_1,\varphi_2\in \mathcal{D}$,  (\ref{convergence of f-e}) holds with $(\varphi_1)'_l(1)$ replaced by $(\varphi_1)'_r(1)$. \el

 \begin{proof} Let $\varphi_1,\varphi_2\in \mathcal{I}$. Note that $f_{\varepsilon}\downarrow h_K$ uniformly on $\sphere$  as $\varepsilon\downarrow 0^+$. Then, for all $u\in \sphere$,
\begin{eqnarray*}
(\varphi_1)_{l}^{'}(1) &=& \lim_{\varepsilon\rightarrow 0^+}f_\varepsilon(u) \cdot \frac{1-\varphi_1\big(\frac{h_K(u)}{f_\varepsilon(u)}\big)}{f_\varepsilon(u)-h_K(u)}\\
&=& \lim_{\varepsilon\rightarrow 0^+}f_\varepsilon(u)\cdot \varphi_2\bigg(\frac{h_L(u)}{f_\varepsilon(u)}\bigg)\cdot \frac{\varepsilon}{f_\varepsilon(u)-h_K(u)}\\&=& h_K(u)\cdot \varphi_2\left(\frac{h_L(u)}{h_K(u)}\right)\cdot \lim_{\varepsilon\rightarrow0^+}\frac{\varepsilon}{f_\varepsilon(u)-h_K(u)}.
\end{eqnarray*}
Rewrite the above limit as follows: \begin{eqnarray*}{(\varphi_1)'_{l}(1)} \cdot \lim_{\varepsilon\rightarrow0^+}\frac{f_\varepsilon(u)-h_K(u)}{\varepsilon}=  \lim_{\varepsilon\rightarrow 0^+}f_\varepsilon(u)\cdot  \lim_{\varepsilon\rightarrow 0^+} \varphi_2\bigg(\frac{h_L(u)}{f_\varepsilon(u)}\bigg)= h_K(u)\cdot \varphi_2\left(\frac{h_L(u)}{h_K(u)}\right).
\end{eqnarray*} Moreover, the convergence is uniform because both  $\{f_\varepsilon(u)\}_{\varepsilon>0}$  and  $\big\{\varphi_2\big(\frac{h_L(u)}{f_\varepsilon(u)}\big)\big\}_{\varepsilon>0}$  are uniformly convergent and uniformly bounded on $\sphere$.

If $\varphi_1,\varphi_2\in\mathcal{D}$ such that $(\varphi_1)_{r}^{'}(1)$ exists and is nonzero, the proof goes along the same manner.
\end{proof}

The geometric interpretation for the nonhomogeneous Orlicz $L_{\varphi}$
mixed volume with $\varphi\in \mathcal{I}\cup\mathcal{D}$ is given in the following theorem.  The following result by  Aleksandrov \cite{Ale1938-1} is needed: if the sequence $\{f_i\}_{i\geq 1} \subset C^{+}(S^{n-1})$ converges to $f\in C^{+}(S^{n-1})$ uniformly on $\sphere$, then the sequence of $\{K_{f_i}\}_{i\geq 1}$, the Aleksandrov bodies associated to $f_i$, converges to $K_f$ with respect to the Hausdorff distance.

 \bt Let $K, L\in\mathcal{K}_0$ and $\varphi_1,\varphi_2\in \mathcal{I}$ be such that $(\varphi_1)'_l(1)$ exists and is positive. Then,
 \be
V_{\varphi_2}(K,L)=\frac{(\varphi_1)'_l(1)}{n}\lim_{\varepsilon\rightarrow
0^+}\frac{|K_{\varepsilon}|-|K|}{\varepsilon}.\label{convergence of f-e---1}
 \ee
 For $\varphi_1,\varphi_2\in \mathcal{D}$,  (\ref{convergence of f-e---1}) holds with $(\varphi_1)'_l(1)$ replaced by $(\varphi_1)'_r(1)$.
  \et
\begin{proof} The uniform convergence of $f_\varepsilon$ on $\sphere$ implies that $K_{\varepsilon}$  converges to $K$ in the Hausdorff distance as $\varepsilon\rightarrow 0^+$. In particular $|K_{\varepsilon}|\rightarrow |K|$ as $\varepsilon\rightarrow 0^+$ and $S_{K_\varepsilon}$ converges to $S_K$ weakly  on $\sphere$.  It follows from  (\ref{weak fact}),   (\ref{Alexander-0718-1}), the Minkowski inequality (\ref{classical Minkowski inequality}) and  Lemma \ref{convergence 2-0718-1}  that  \begin{eqnarray*} \liminf_{\varepsilon\rightarrow 0^+}|K_\varepsilon|^\frac{n-1}{n}\cdot
\frac{|K_\varepsilon|^\frac{1}{n}-|K|^\frac{1}{n}}{\varepsilon}
&\ge&\liminf_{\varepsilon\rightarrow
0^+}\frac{|K_\varepsilon|-V_1(K_\varepsilon, K)}{\varepsilon}\\
&=&\liminf_{\varepsilon\rightarrow 0^+}\frac{V_1(K_\varepsilon,
f_\varepsilon)-V_1(K_\varepsilon,
h_{K})}{\varepsilon}\\&=&\lim_{\varepsilon\rightarrow
0^+}\frac{1}{n}\int_{S^{n-1}}\frac{f_\varepsilon(u)
-h_K(u)}{\varepsilon}dS_{K_\varepsilon}(u)
 \\
&=&\frac{1}{(\varphi_1)'_l(1)}V_{\varphi_2}(K,L).
 \end{eqnarray*} Similarly, due to  $h_{K_\varepsilon}\le f_\varepsilon$,   \begin{eqnarray*}|K|^\frac{n-1}{n}\cdot  \limsup_{\varepsilon\rightarrow 0^+}
\frac{|K_\varepsilon|^\frac{1}{n}-|K|^\frac{1}{n}}{\varepsilon}
&\le&\limsup_{\varepsilon\rightarrow 0^+}\frac{V_1(K,
K_\varepsilon)-|K|}{\varepsilon}\\
&\le& \limsup_{\varepsilon\rightarrow 0^+}\frac{V_1(K,
f_\varepsilon)-V_1(K, h_{K})}{\varepsilon}\\
&=&\lim_{\varepsilon\rightarrow
0^+}\frac{1}{n}\int_{S^{n-1}}\frac{f_\varepsilon(u)
-h_K(u)}{\varepsilon}dS_K(u) \\ &=& \frac{1}{(\varphi_1)'_l(1)}V_{\varphi_2}(K,L).
\end{eqnarray*} Combing the inequalities above,  one has
$$(\varphi_1)'_l(1) \cdot |K|^\frac{n-1}{n} \cdot  \lim_{\varepsilon\rightarrow 0^+}
\frac{|K_\varepsilon|^\frac{1}{n}-|K|^\frac{1}{n}}{\varepsilon}=
V_{\varphi_2}(K, L). $$
Let $g(\varepsilon)=|K_\varepsilon|^\frac{1}{n}$ and $g(0)=|K|^\frac{1}{n}$. Then  \begin{eqnarray*}
\frac{(\varphi_1)'_l(1)}{n}\cdot \lim_{\varepsilon\rightarrow 0^+}
\frac{|K_\varepsilon|-|K|}{\varepsilon}
&=&\frac{(\varphi_1)'_l(1)}{n}\cdot  \lim_{\varepsilon\rightarrow 0^+}\frac{g(\varepsilon)^n-g(0)^n}{\varepsilon}\\
&=& (\varphi_1)'_l(1) \cdot g(0)^{n-1}\lim_{\varepsilon\rightarrow 0^+}\frac{g(\varepsilon)-g(0)}{\varepsilon}\\
&=&V_{\varphi_2}(K,L).
\end{eqnarray*}
The result for $\varphi_1, \varphi_2\in \mathcal{D}$ follows along
the same lines.  \end{proof}

Let $\varphi_1(t)=\varphi_2(t)=t^p$ for $0\neq p\in \bbR$. Then formula (\ref{orlicz equation}) gives the $L_p$ addition of $h_K$ and $h_L$:
$$f_{p, \varepsilon}(u) =\big[h_K(u)^p+\varepsilon h_L(u)^p\big]^{1/p}\ \ \ \ \mathrm{for}\ \ u\in \sphere.$$ Then  the $L_p$ mixed volume of $K$ and $L$ \cite{Lut1993, Ye2015a} is the first order variation at $\varepsilon=0$ of the volume of $K_{f_{p, \varepsilon}}$, the Aleksandrov body associated to $f_{p, \varepsilon}$: \be
V_p(K,L)=\frac{p}{n} \cdot \lim_{\varepsilon\rightarrow
0^+}\frac{|K_{f_{p, \varepsilon}}|-|K|}{\varepsilon}.\nonumber
 \ee

\subsection{The Orlicz-Petty bodies and the continuity of nonhomogeneous Orlicz geominimal surface areas}\label{section-nonhomogeneous-petty}

In this subsection, we establish the continuity of the nonhomogeneous
Orlicz geominimal surface areas, whose proof is similar to that
in Section \ref{section-homogeneous-petty}. For completeness, we
still include the proof with emphasis on the modification.

The nonhomogeneous Orlicz geominimal surface areas can be defined as
follows. \bd \label{definition-0719} Let $K\in\mathcal{K}_{0}$ be a
convex body
with the origin in its interior.\\
(i) For $\varphi\in\widehat{\Phi}_1\cup\widehat{\Psi}$, define the nonhomogeneous
Orlicz $L_\varphi$ geominimal surface area of $K$ by
\begin{eqnarray*}
G^{orlicz}_\varphi(K)=\inf_{L\in\cK_0}\{nV_\varphi(K,\mathrm{vrad}(L^\circ)L)\}=\inf\{nV_\varphi(K,L):L\in\mathcal{K}_{0}
\ \mathrm{with}\ \  |L^\circ|=\omega_n\}.
\end{eqnarray*}
(ii) For $\varphi\in\widehat{\Phi}_2$, define the nonhomogeneous Orlicz $L_\varphi$
geominimal surface area of $K$ by
\begin{eqnarray*}
G^{orlicz}_\varphi(K)=\sup_{L\in\cK_0}\{nV_\varphi(K,\mathrm{vrad}(L^\circ)L)\}=\sup\{nV_\varphi(K,L):L\in\mathcal{K}_{0}
\ \mathrm{with}\ \ |L^\circ|=\omega_n\}.
\end{eqnarray*}
\ed Note that the nonhomogeneous Orlicz $L_\varphi$ geominimal
surface area can be defined for more general functions than
$\varphi\in \mathcal{I}\cup\mathcal{D}$ (see more details in \cite{Ye2015b}). However, from Section \ref{section-homogeneous-petty},
one sees that the monotonicity of $\varphi$ is crucial to
establish continuity of Orlicz geominimal surface areas. Hence, in
this section, we only consider
$\varphi\in\widehat{\Phi}\cup\widehat{\Psi}$. We can use the
following example to see that $G^{orlicz}_\varphi(\cdot)$ is not
homogeneous (see Corollary 3.1 in
\cite{Ye2015b}):$$G^{orlicz}_\varphi(r\ball)=\varphi(1/r)\cdot
r^n\cdot n\omega_n.$$

\bp\label{p22--222} Let $\{K_i\}_{i\geq 1}\subset \cK_0$ and $\{L_i\}_{i\geq 1}\subset \cK_0$
be such that
$K_i\rightarrow K\in\mathcal{K}_{0}$ and $L_i\rightarrow
L\in\mathcal{K}_{0}$ as $i\rightarrow \infty$. For
$\varphi\in\widehat{\Phi}\cup \widehat{\Psi}$,  one has
$V_\varphi(K_i,L_i)\rightarrow V_\varphi(K, L)$ as $i\rightarrow
\infty$. \ep

\begin{proof} As $K_i\rightarrow K\in\mathcal{K}_{0}$ and
$L_i\rightarrow L\in\mathcal{K}_{0}$,  one can find constants $r,
R>0$ such that these bodies contain $r\ball$ and are
contained in $R\ball$. Moreover, $h_{K_i}\rightarrow h_{K}$
and $h_{L_i}\rightarrow h_{L}$ uniformly on $S^{n-1}$. Together with
Lemma \ref{uniform-continuous-convergence} (where we can let
$I=[r/R, R/r]$),   one has
\begin{eqnarray*}
\varphi\left(\frac{h_{L_i}(u)}{h_{K_i}(u)}\right)h_{K_i}(u)\rightarrow
\varphi\left(\frac{h_L(u)}{h_K(u)}\right)h_K(u)  \quad
\mathrm{uniformly\ on} \quad S^{n-1}.
\end{eqnarray*} Formula (\ref{weak fact}) then implies \begin{eqnarray*}
\int_{S^{n-1}}\varphi\left(\frac{h_{L_i}(u)}{h_{K_i}(u)}\right)h_{K_i}(u)dS_{K_i}(u)\rightarrow
\int_{S^{n-1}}\varphi\left(\frac{h_L(u)}{h_K(u)}\right)h_K(u)dS_K(u).
\end{eqnarray*}
This completes the proof. \end{proof}

Similar to Proposition \ref{semicontinuous-07-17-1----1},  the
nonhomogeneous Orlicz $L_{\varphi}$ geominimal surface area is upper
(lower, respectively) semicontinuous on $\cK_0$ with respect to the
Hausdorff distance for $\varphi\in
\widehat{\Phi}_1\cup\widehat{\Psi}$ (for
$\varphi\in\widehat{\Phi}_2$, respectively).

The following proposition states that the Orlicz-Petty bodies exist.
See \cite{YJL} for special results when $\varphi\in \mathcal{I}$ is
convex (in this case, $\varphi\in\widehat{\Phi}_1$).

\bp \label{p33} Let $K\in\mathcal{K}_{0}$ and  $\varphi\in
\widehat{\Phi}_1 $. There exists a convex body
$M\in\mathcal{K}_{0}$ such that
\begin{eqnarray*}
G^{orlicz}_\varphi(K)=nV_\varphi(K, M)  \quad  \mathrm{and}  \quad
|M^\circ|=\omega_n.
\end{eqnarray*}
If in addition $\varphi$ is convex, such a convex
body is unique. \ep

\begin{proof} Let $\varphi\in \widehat{\Phi}_1$. It follows from the definition of $G^{orlicz}_\varphi(K)$ that
there exists a sequence $\{M_i\}_{i\geq 1}\subset \cK_{0}$ such that
$nV_\varphi(K,M_i)  \rightarrow G^{orlicz}_\varphi(K)$,
$|M_i^\circ|=\omega_n$ and $2V_\varphi(K,\ball)\ge V_\varphi(K,M_i)$
for all  $i\geq 1$.  Let $R_i=\rho_{M_i}(u_i)=\max\{\rho_{M_i}(u):
u\in S^{n-1}\}$ and assume that  $\sup_{i\geq 1} R_i=\infty.$
Without loss of generality, let $R_i\rightarrow \infty$ and
$u_i\rightarrow v$ (due to the compactness of $\sphere$) as
$i\rightarrow \infty$. As before, $h_{M_i}(u)\geq R_i\cdot\langle
u,u_i\rangle_+$ for all $u\in S^{n-1}$.

Let  $D>0$ be given.  By Definition \ref{definition-0719}, Fatou's
lemma, continuity of $\varphi$, (\ref{sphere integration}) and the
fact that $\varphi$ is increasing, one has
\begin{eqnarray*}
2V_\varphi(K,\ball)&\ge&  \lim_{i\rightarrow \infty}\frac{1}{n}\int_{S^{n-1}}\varphi\left(\frac{h_{M_i}(u)}{h_K(u)}\right)h_K(u)dS_K(u)\\
&\ge& \liminf_{i\rightarrow \infty}\frac{1}{n}\int_{S^{n-1}}\varphi\left(\frac{R_i\cdot\langle u,u_i\rangle_+}{R_K}\right)r_K dS_K(u)\\&\ge& \liminf_{i\rightarrow \infty}\frac{1}{n}\int_{S^{n-1}}\varphi\left(\frac{D\cdot \langle u,u_i\rangle_+}{R_K}\right)r_K dS_K(u)\\
&\geq& \frac{1}{n}\int_{S^{n-1}}\varphi\left(\frac{D\cdot \langle u,v\rangle_+}{R_K}\right)r_K dS_K(u)\\
&\geq&  \varphi\left(\frac{D}{j_0\cdot R_K}\right)\cdot
\frac{r_K}{n}\cdot \frac{c_1}{2}.
\end{eqnarray*} A contradiction (i.e.,  $2V_\varphi(K,\ball)>\infty$) is
obtained by letting $D\rightarrow \infty$ and the fact that $\lim_{t\rightarrow\infty}\phi(t)=\infty$ (as
$\varphi\in \widehat{\Phi}_1$ is increasing and unbounded).  That is,
$\{M_i\}_{i\geq 1} $ is uniformly bounded, and a convergent
subsequence of $\{M_i\}_{i\geq 1}$, which will not be relabeled,  can
be found due to the Blaschke selection theorem.  Let $M$ be the
limit of $\{M_i\}_{i\geq 1}$ and then $M\in \cK_0$  due to Lemma
\ref{l1}. Moreover,  $|M_i^\circ|=\omega_n$ for all $i\geq 1$
implies $|M^\circ|=\omega_n$. It follows from Proposition
\ref{p22--222} that $M$ is the desired body such that
$G^{orlicz}_\varphi(K)=nV_\varphi(K, M)$ and $ |M^\circ|=\omega_n.$

For uniqueness, let $M_1,M_2\in \mathcal{K}_0$  be such that
$|M_1^\circ|=|M_2^\circ|=\omega_n$ and $${V}_\varphi(K,
M_1)=\inf_{L\in\cK_0}\{V_\varphi(K,
\vrad(L^\circ)L)\}={V}_\varphi(K, M_2).$$ Let
$M=\frac{M_1+M_2}{2}$. Then $\mathrm{vrad}(M^\circ)\leq 1$
with equality if and only if $M_1=M_2$ (see inequality
(\ref{mean-body-volume})). The fact that $\varphi$ is convex yields
that $V_\varphi(K,M)\leq V_\varphi(K,M_1)$. Therefore, if $M_1\neq
M_2$ (hence $\vrad(M^\circ)<1$), the fact that $\varphi$ is strictly increasing implies that
$$nV_\varphi(K,\vrad(M^\circ) M)<nV_\varphi(K,M)\leq
nV_\varphi(K,M_1)=nV_\varphi(K, \vrad(M_1^\circ)M_1).$$ This
contradicts with the minimality of $M_1$ and hence the uniqueness
follows.
\end{proof}

\bd Let $K\in\mathcal{K}_{0}$ and  $\varphi\in \widehat{\Phi}_1 $. A convex 
body $M\in \cK_0$ is said to be an $L_{\varphi}$ Orlicz-Petty body
of $K$, if $M\in \cK_0$ satisfies   \be \nonumber
G^{orlicz}_\varphi(K)=nV_\varphi(K, M)  \quad  \mathrm{and} \quad
|M^\circ|=\omega_n. \ee Denote by ${T}_\varphi K$ the set of all
$L_{\varphi}$ Orlicz-Petty bodies of $K$.   \ed

Let $\varphi\in \widehat{\Phi}_1$. The set ${T}_\varphi K$ has many
 properties same as those for $\widehat{T}_\phi K$. For instance,
 ${T}_{\varphi} K$ is $SL(n)$-invariant: $ T_\varphi (AK)=
A(T_\varphi K)$  for all $A\in SL(n)$. Moreover, if $K$ is a
polytope, then any convex body in $T_\varphi K$  must be a polytope
with faces parallel to those of $K$.  If in addition $\varphi$ is
convex,  $|T_{\varphi}K|\cdot |(T_{\varphi}K)^\circ| \leq |K|\cdot |K^\circ|.$

The continuity of the nonhomogeneous Orlicz $L_{\varphi}$ geominimal
surface areas is proved in the following theorem.  See \cite{YJL} for
special results when $\varphi\in \mathcal{I}$ is convex (in this case, $\varphi\in\widehat{\Phi}_1$).

\bt If $\varphi\in \widehat{\Phi}_1$, then the functional
$G^{orilcz}_\varphi (\cdot)$ on $\cK_0$ is
continuous with respect to the Hausdorff distance. \et
 \bpf Let $\varphi\in \widehat{\Phi}_1$. The upper semicontinuity has been stated after Proposition \ref{p22--222}. To conclude the continuity, it is enough to prove the lower semicontinuity.

 To this end, we need the following statement: if $K_i\rightarrow K$ as $i\rightarrow \infty$ with $K_i, K\in \cK_0$ for all $i\geq 1$, there exists  a constant $R'>0$ such that $M_i\subset R'\ball
$ for all (given) $M_i\in T_\varphi K_i$, $i\geq 1$.  The proof basically
follows the idea in Lemma \ref{l3}. In fact, assume that there is no
constant $R'$ such that $M_i\subset R'\ball$ for $i\geq 1$. Let 
$R_i=\rho_{M_i}(u_i)=\text{max}\{\rho_{M_i}(u): u\in S^{n-1}\}.$
It follows from the Blaschke selection theorem and the compactness
of $\sphere$ that there is a subsequence of $\{K_i\}_{i\geq 1}$, which will not be relabeled, such that, $ R_i\rightarrow \infty$  and
$u_i\rightarrow v$ as $i\rightarrow \infty.$  For any given
$\varepsilon>0$, one has
\begin{eqnarray*}
V_\varphi(K,\ball)&=&
\lim_{i\rightarrow\infty}V_\varphi(K_i,\ball)\\ &\geq&
  \lim_{i\rightarrow\infty}\frac{1}{n}\int_{S^{n-1}}\varphi\left(\frac{h_{M_i}(u)}{h_{K_i}(u)}\right)h_{K_i}(u)dS_{K_i}(u)\\
&\geq&
\lim_{i\rightarrow\infty}\frac{1}{n}\int_{S^{n-1}}\varphi\left(\frac{\langle u,u_i\rangle_+}{({R_i}^{-1}+\varepsilon)\cdot R}\right)r dS_{K_i}(u)\\
&=&\frac{1}{n}\int_{S^{n-1}}  \varphi\left(\frac{\langle u,v\rangle_+}{\varepsilon\cdot R}\right)r dS_K(u)\\
&\geq&
\frac{1}{n}\int_{\Sigma_{j_0}(v)}\varphi\left(\frac{1}{\varepsilon\cdot  j_0\cdot R}\right)r dS_K(u)\\
&=& \varphi\left(\frac{1}{\varepsilon\cdot j_0\cdot R}\right)\cdot
\frac{r}{n}\cdot\frac{c_1}{2}
\end{eqnarray*}
where $r, R>0$ are constants such that $r\ball \subset K_i, K\subset
R\ball$ for all $i\geq 1$.  A contradiction (i.e.,
$V_\varphi(K,\ball)\geq \infty$) is obtained by taking
$\varepsilon\rightarrow 0^+$ and the fact that
$\lim_{t\rightarrow\infty} \varphi(t)=\infty$.

Now let us prove the lower semicontinuity of
$G^{orilcz}_\varphi(\cdot)$ and the continuity then follows. Let
$l=\liminf_{i\rightarrow \infty} G^{orlicz}_\varphi(K_i).$ There is a subsequence of $\{K_i\}_{i\geq 1}$, say
$\{K_{i_k}\}_{k\geq 1}$, such that, $l=\lim_{k\rightarrow \infty}
G^{orlicz}_\varphi(K_{i_k}).$ From the arguments in the previous paragraph, one sees
that $\{M_{i_k}\}_{k\geq 1}$ is uniformly bounded. The Blaschke
selection theorem and Lemma \ref{l1} imply that there exists a
subsequence of $\{M_{i_k}\}_{k\geq 1}$, which will not be relabeled,
and a convex body $M\in\cK_0$ such that $M_{i_k}\rightarrow M$ as
$k\rightarrow \infty$ and $|M^\circ|=\omega_n$. Proposition
\ref{p22--222} yields
$$G^{orlicz}_\varphi(K_{i_k})=nV_\varphi(K_{i_k}, M_{i_k})\rightarrow
nV_\varphi(K, M)\geq G^{orlicz}_\varphi(K) \ \ \ \ \mathrm{as} \ \ \
k\rightarrow \infty.$$   Hence, $\liminf_{i\rightarrow\infty}
G^{orlicz}_\varphi(K_i)\ge G^{orlicz}_\varphi(K)$ and this completes
the proof. \epf Similar to Proposition \ref{continuity of petty
bodies}, we can prove that if $\varphi\in
\widehat{\Phi}_1$ is convex, then $T_\varphi:
 \cK_0\mapsto \cK_0$ is continuous with respect to the Hausdorff distance.

\section{The Orlicz geominimal surface areas with respect to $\cK_e$ and the related Orlicz-Petty bodies}
\label{section-symmetric-geom}

In Sections \ref{section-homogeneous-petty} and
\ref{section-nonhomogeneous-gemo}, we prove the existence of the 
Orlicz-Petty bodies and the continuity for the Orlicz geominimal
surface areas under the condition $\phi\in \widehat{\Phi}_1$. For $\phi\in \widehat{\Phi}_2\cup \widehat{\Psi}$,
our method fails. In fact, when  $\phi\in \widehat{\Phi}_2$, we can
prove the following result. \bp\label{Polytope-infty-case} Let
$\phi, \varphi\in  \widehat{\Phi}_2$ and $K\in\cK_0$ be a polytope. Then  
$$\affineg(K)=0 \ \ \ \mathrm{and} \ \ \ \  G^{orlicz}_{\varphi}(K)=\infty.$$
  \ep
\begin{proof} Let $\phi\in  \widehat{\Phi}_2$ and $K\in\cK_0$ be a polytope.  Then the surface area measure of $K$ is concentrated on finite directions, say $\{u_1, \cdots, u_m\}\subset \sphere$. As $\affineg(K)$ is $SL(n)$ invariant, we can assume that, without loss of generality, $S_K(u_1)>0$ and $u_1=e_1$ with $\{e_1, \cdots, e_n\}$ the canonical orthonormal basis of $\bbR^n$.

Let $\epsilon>0$ and
$A_{\epsilon}=\mathrm{diag}(\epsilon, b_2, \cdots, b_n)$ with
constants $b_2, \cdots, b_n>0$  such that $b_2\cdots
b_n=1/\epsilon.$ Clearly $\det A_{\epsilon}=1$ and then
$A_{\epsilon}\in SL(n)$. Let $L_{\epsilon}=A_{\epsilon}K\in \cK_0$
and $\lambda_{\epsilon}=\widehat{V}_{\phi} (K, L_{\epsilon})$. Then,
$h_{L_{\epsilon}}(e_1)=\epsilon\cdot h_K(e_1)$ for all $\epsilon>0$
and \begin{eqnarray*} 1&=&\int_{S^{n-1}}\phi\left(\frac{n|K|\cdot
h_{L_{\epsilon}}(u)}{\lambda_{\epsilon} \cdot h_K(u)}\right)\dV \\
&=& \frac{1}{n|K|} \cdot  \sum_{i=1}^m  \phi\left(\frac{n|K|\cdot
h_{L_{\epsilon}}(u_i)}{\lambda_{\epsilon} \cdot
h_K(u_i)}\right)h_K(u_i) S_K(u_i) \\ &\geq & \frac{1}{n|K|}\cdot
\phi\left(\frac{n|K|\cdot h_{L_{\epsilon}}(e_1)}{\lambda_{\epsilon}
\cdot h_K(e_1)}\right)h_K(e_1) S_K(e_1) \\&=& \frac{1}{n|K|}\cdot
\phi\left(\frac{n|K|\cdot \epsilon}{\lambda_{\epsilon}
}\right)h_K(e_1) S_K(e_1). \end{eqnarray*} Assume that
$\inf_{\epsilon>0} \lambda_{\epsilon}> 0$. There exists a
constant $c>0$ such that $\lambda_{\epsilon}>c$ for all
$\epsilon>0$. The above inequality and the fact that $\phi\in
\widehat{\Phi}_2$ is decreasing imply
\begin{eqnarray*}
1 \geq  \frac{1}{n|K|}\cdot    \phi\left(\frac{n|K|\cdot
\epsilon}{\lambda_{\epsilon} }\right)h_K(e_1) S_K(e_1) \geq
\frac{1}{n|K|}\cdot    \phi\left(\frac{n|K|\cdot
\epsilon}{c}\right)h_K(e_1) S_K(e_1). \end{eqnarray*} Recall that
$\lim_{t\rightarrow 0} \phi(t)=\infty$ as $\phi\in
\widehat{\Phi}_2\subset \mathcal{D}$. A contradiction (i.e., $1\geq
\infty$) is obtained if we let $\epsilon\rightarrow 0^+$. This means
that
$$\inf_{\epsilon>0}\lambda_{\epsilon}=\inf_{\epsilon>0}\widehat{V}_{\phi}(K,
L_{\epsilon})=0.$$ On the other hand,
$\vrad(L_{\epsilon}^\circ)=\vrad(K^\circ)$ for all $\epsilon>0$.
This yields that \begin{eqnarray*} 0 \leq
\affineg(K)=\inf_{L\in\cK_0}\{\widehat{V}_\phi(K,\mathrm{vrad}(L^\circ)L)\}
\leq  \inf_{\epsilon>0}\{\widehat{V}_\phi(K,
\mathrm{vrad}(L_{\epsilon}^\circ)L_{\epsilon})\}=0.  \end{eqnarray*}

For the nonhomogeneous Orlicz $L_{\varphi}$ geominimal surface area,
the proof follows along the same lines. In fact, for all
$\epsilon>0$,
\begin{eqnarray*} V_{\varphi}(K,  \mathrm{vrad}(L_{\epsilon}^\circ)L_{\epsilon})&=&
\frac{1}{n} \int_{\sphere} \varphi\bigg(\frac{\vrad(K^\circ)
h_{L_{\epsilon}}(u)} {h_K(u)}\bigg)h_K(u)\,dS_K(u)\\ &\geq&
\frac{1}{n} \cdot \varphi (\vrad(K^\circ) \cdot \epsilon)\cdot h_K(e_1)\cdot S_K(e_1).
\end{eqnarray*} and  the desired result follows $$G^{orlicz}_{\varphi}(K)=
\sup_{L\in\cK_0}\{nV_\varphi(K, \mathrm{vrad}(L^\circ)L)\}  \geq 
\sup_{\epsilon>0} \{nV_{\varphi}(K,
\mathrm{vrad}(L_{\epsilon}^\circ)L_{\epsilon})\}=\infty.$$ This completes the proof. \end{proof}

An immediate consequence of Proposition \ref{Polytope-infty-case} is
that for $\phi\in \widehat{\Phi}_2$, the homogeneous Orlicz
$L_{\phi}$ geominimal surface area is not continuous but only upper
semicontinuous on $\cK_0$ with respect to the Hausdorff distance. To
this end, let $K=\ball$. One can find a sequence of polytopes
$\{P_i\}_{i\geq 1}$ such that $P_i\rightarrow \ball$ as
$i\rightarrow \infty$ with respect to the Hausdorff distance. However, one
cannot expect to have $\affineg(P_i)\rightarrow \affineg(\ball)$ as
$i\rightarrow \infty$, since $\affineg(P_i)=0$ for all $i\geq 1$ and
$\affineg(\ball)=n\omega_n>0$. Moreover, if $\phi\in \widehat{\Phi}_2$ and $K$ is a polytope, the Orlicz-Petty bodies for $K$ do not exist (i.e., $\widehat{T}_{\phi}K=\emptyset$). This is because $\affineg(K)=0$, but $\widehat{V}_{\phi}(K, M)>0$ for $M\in \widehat{T}_{\phi}K\subset \cK_0$ if $\widehat{T}_{\phi}K\neq \emptyset$.   Similarly,  the nonhomogeneous Orlicz
$L_{\varphi}$ geominimal surface area is not continuous but only lower
semicontinuous on $\cK_0$ with respect to the Hausdorff distance as
$G_{\varphi}^{orlicz}(P_i)=\infty$ for all $i\geq 1$. Moreover, if $\varphi\in \widehat{\Phi}_2$ and $K$ is a polytope, the Orlicz-Petty bodies for $K$ do not exist. 

Our method to prove the existence of the Orlicz-Petty
bodies in Sections \ref{section-homogeneous-petty} and
\ref{section-nonhomogeneous-gemo}
  heavily relies on the value of the Orlicz mixed volumes of $K$ and line segements $[0, v]=\{tv: t\in [0, 1]\}$  for $v\in \sphere$ (for instance $\widehat{V}_{\phi}(K, [0, v])$ in Section \ref{section-homogeneous-petty}).  However, $\widehat{V}_{\phi}(K, [0, v])$ are always $0$ for all $v\in \sphere$  if $\phi\in\mathcal{D}$.  It seems impossible to prove the existence of  the Orlicz-Petty bodies for $\phi\in \mathcal{D}$ and for general (even with enough smoothness) convex bodies $K\in \cK_0$.

When $\phi(t)= t^p$ for $p\in (-1, 0)$,  one can calculate
that, for all $v\in \sphere$  (see e.g., \cite{Zhang2007}),
\be\label{p-projection} \int_{S^{n-1}}|\langle u,v \rangle|^p
d\sigma(u)=C_{n, p},
 \ee
where $C_{n, p}>0$ is a finite constant depending on $n$ and $p$.
Note that the integrand includes $|\langle u, v\rangle|$ rather than
$\langle u, v\rangle_+$. This suggests that our method in Sections
\ref{section-homogeneous-petty} and
\ref{section-nonhomogeneous-gemo} may still work for smooth enough
$K\in\cK_0$ and a modified Orlicz geominimal surface
area.

Our modified Orlicz geominimal surface area is given by the
following definition. Recall that $\cK_e$ is the set of all origin-symmetric
convex bodies.

\bd Let $K\in \cK_0$ and $\phi\in \widehat{\Phi}$. The homogeneous
Orlicz $L_{\phi}$ geominimal surface area of $K$ with respect to
$\cK_e$ is defined by \be\label{symmetric-definition}
\widehat{G}_\phi^{orlicz}(K, \cK_e)=\inf\{\widehat{V}_\phi(K, L):\
L\in\cK_e \quad \mathrm{with}\quad |L^\circ|=\omega_n\}.
 \ee While if $\phi\in \widehat{\Psi}$, $\affineg(\cdot, \cK_e)$ can be defined similarly with `` $\inf$" replaced by `` $\sup$".
 \ed

Properties for $\widehat{G}_\phi^{orlicz}(\cdot, \cK_e)$, such as
affine invriance, homogeneity, affine isoperimetric inequalities
(requiring $K\in \cK_e$), and continuity if $\phi\in
\widehat{\Phi}_1$,  are the same as those for $\affineg(\cdot)$ proved in Sections
\ref{section-homogeneous-geom} and \ref{section-homogeneous-petty}.
The details are left for readers.

In the rest of this section, we will prove the existence of the
Orlicz-Petty bodies and the ``continuity" of $\affineg(\cdot, \cK_e)$
for certain $\phi\in \widehat{\Phi}_2$.  We will work on convex
bodies $K\in C_+^2$. A convex body $K$  is said to be in $C_+^2$ if
$K$ has $C^2$ boundary and positive curvature function $f_K$.
Hereafter, the curvature function of $K$ is the function
$f_K:\sphere\rightarrow (0, \infty)$ such that
$$f_K(u)=\frac{\,dS_K(u)}{\,d\sigma(u)} \ \ \ \mathrm{for} \ \
u\in \sphere. $$ Let $\phi\in \widehat{\Phi}_2$ be such that  for all $x\in
\bbR^n$,
\begin{equation}\label{condition on phi} \int_{\sphere}\phi(
|\langle u, x\rangle|)\,d\sigma(u) <\infty \ \ \ \mathrm{and} \ \ \
\lim_{\|x\|\rightarrow \infty} \int_{\sphere}\phi( |\langle u,
x\rangle|)\,d\sigma(u)=0.
\end{equation} Note that $\phi(t)=t^p$ for $p\in (-1, 0)$ satisfies
the condition (\ref{condition on phi}) due to formula
(\ref{p-projection}). Moreover, (\ref{condition on phi})  is equivalent to, for all $s>0$,
\begin{equation*} \int_{\sphere}\phi(
s\cdot |\langle u, e_1\rangle|)\,d\sigma(u) <\infty \ \ \ \mathrm{and} \ \ \
\lim_{s\rightarrow \infty} \int_{\sphere}\phi(s\cdot |\langle u,
e_1\rangle|)\,d\sigma(u)=0.
\end{equation*}

\bp \label{p3-07-22} Let $K\in C^2_+$ and  $\phi\in\widehat{\Phi}_2$
satisfy (\ref{condition on phi}). Then there exists $M
\in\cK_{e}$ such that
\begin{displaymath}
\widehat{G}^{orlicz}_\phi(K, \cK_e)=\widehat{V}_\phi(K, M) \quad
\mathrm{and}\ \quad |M^\circ|=\omega_n.
\end{displaymath}
 \ep

\begin{proof} Let $K\in C^2_+$. Its curvature function $f_K$ is continuous on $\sphere$ and hence has maximum which will be denoted by $F_K<\infty$.  By (\ref{symmetric-definition}), for $\phi\in \widehat{\Phi}_2$, there exists a sequence
$\{M_i\}_{i\geq 1}\subset \mathcal{K}_{e}$ such that
$\widehat{V}_\phi(K,M_i)  \rightarrow \affineg(K, \cK_e)$ as
$i\rightarrow\infty$, $2 \widehat{V}_\phi(K,\ball)\ge
\widehat{V}_\phi(K,M_i)$ and $|M_i^\circ|=\omega_n$  for all $i\geq
1$. Again let $R_i=\rho_{M_i}(u_i)=\max\{\rho_{M_i}(u):u\in
S^{n-1}\}.$ Then $h_{M_i}(u)\ge R_i\cdot|\langle u,u_i\rangle|$ for
all $u\in \sphere$ and all $i\geq 1$. Corollary
\ref{corollary:homogeneous-2}, together with (\ref{condition on phi})
and  the fact that $\phi\in \widehat{\Phi}_2$ is decreasing, implies
that, for all $i\geq 1$,
\begin{eqnarray*}
1&=& \int_{S^{n-1}}\phi\left(\frac{n|K|\cdot h_{M_i}(u)}
{\widehat{V}_\phi(K,M_i)\cdot h_K(u)}\right)\,d\widetilde{V}_K(u)\\
&\leq& \int_{S^{n-1}}\phi\left(\frac{n|K|\cdot R_i\cdot|\langle
u,u_i\rangle|}{2\widehat{V}_\phi(K,B_2^n)\cdot
h_K(u)}\right)\cdot \frac{h_K(u)f_K(u)}{n|K|}\,d\sigma(u)\\
&\leq&  \int_{S^{n-1}}\phi\left(\frac{n|K|\cdot R_i\cdot|\langle
u,u_i\rangle|}{2\widehat{V}_\phi(K,B_2^n)\cdot R_K}\right)\cdot
\frac{R_KF_K}{n|K|}\,d\sigma(u)<\infty.
\end{eqnarray*}
Assume that $\sup_{i\geq 1}R_i=\infty$. Without loss of generality,
let $\lim_{i\geq 1}R_i=\infty$ and $$x_i=\frac{n|K|\cdot R_i\cdot
u_i}{2 \widehat{V}_\phi(K,B_2^n)\cdot R_K}. $$ Then $\lim_{i\rightarrow \infty}\|x_i\|=\infty.$ It follows
from (\ref{condition on phi}) that
\begin{eqnarray*}
1&\leq& \frac{R_KF_K}{n|K|} \cdot \lim_{i\rightarrow \infty}
\int_{S^{n-1}}\phi\left(|\langle u,
x_i\rangle|\right)\,d\sigma(u)=0.
\end{eqnarray*} This is a contradiction and hence $\sup_{i\geq 1}R_i<\infty$. In other words, the sequence $\{M_i\}_{i\geq 1}$ is uniformly bounded. By the Blaschke selection
theorem, there exists a convergent subsequence of $\{M_i\}_{i\geq
1}$ (still denoted by $\{M_i\}_{i\geq 1}$) and a convex body
$M\in\cK$ such that $M_i\rightarrow M$ as $i\rightarrow \infty$. As
$|M_i^\circ|=\omega_n$ for all $i\geq 1$, Lemma \ref{l1} gives
$M\in\cK_e$ and $|M^\circ|=\omega_n$. Proposition \ref{p2} concludes
that $M$ is the desired body.
\end{proof}

\bd Let $K\in C^2_+$ and  $\phi\in\widehat{\Phi}_2$  satisfy
(\ref{condition on phi}). A convex body $M\in \cK_e$ is said to be an $L_{\phi}$ Orlicz-Petty
body of $K$ with respect to $\cK_e$, if $M\in\cK_e$ satisfies   \be
\nonumber \widehat{G}^{orlicz}_\phi(K,
\cK_e)=\widehat{V}_\phi(K, M)  \quad \mathrm{and} \quad
|M^\circ|=\omega_n. \ee Denote by $\widehat{T}_{\phi}(K, \cK_e)$ the
set of all such bodies. \ed

\bt \label{L-U-3} Let $\phi\in\widehat{\Phi}_2$  satisfy
(\ref{condition on phi}). Assume that $\{K_i\}_{i\geq 0} \subset
C^2_+$ such that $K_i\rightarrow K_0$ as $i\rightarrow \infty$ and
$\{f_{K_i}\}_{i\geq 1}$ is uniformly bounded on $\sphere$. Then
$$\lim_{i\rightarrow \infty} \affineg(K_i, \cK_e)=\affineg(K_0,
\cK_e).$$ \et

\begin{proof}
As $K_i\rightarrow K_0$,  
there exist $r, R>0$ such that $ r\ball\subset K_i\subset R\ball$ for all $i\geq 0.$  We claim that there is a
finite constant $R'>0$ such that $M_i\subset R'\ball$ for all (given)
$M_i\in \widehat{T}_\phi (K_i, \cK_e)$, $i\geq 1$. Suppose that
there is no such finite constant. Without loss of generality, assume
that $\lim_{i\rightarrow \infty} R_i=\infty$ and $u_i\rightarrow v$
(due to the compactness of $\sphere$) as $i\rightarrow \infty$,
where again
$$R_i=\rho_{M_i}(u_i)=\max\{\rho_{M_i}(u):u\in S^{n-1}\}.$$ As
before, $h_{M_i}(u)\ge R_i\cdot|\langle u,u_i\rangle|$ for all $u\in
\sphere$ and $i\geq 1$. Corollary \ref{corollary:homogeneous-2},
together with (\ref{condition on phi}) and  the fact that $\phi\in
\widehat{\Phi}_2$ is decreasing, implies that, for all $i\geq 1$,
\begin{eqnarray*}
1&=& \int_{S^{n-1}}\phi\left(\frac{n|K_i|\cdot h_{M_i}(u)}
{\widehat{V}_\phi(K_i, M_i)\cdot h_{K_i}(u)}\right)\,d\widetilde{V}_{K_i}(u)\\
&\leq& \int_{S^{n-1}}\phi\left(\frac{n|K_i|\cdot R_i\cdot|\langle
u,u_i\rangle|}{\widehat{V}_\phi(K_i ,B_2^n)\cdot
h_{K_i}(u)}\right)\cdot \frac{h_{K_i}(u)f_{K_i}(u)}{n|{K_i}|}\,d\sigma(u)\\
&\leq&  \int_{S^{n-1}}\phi\left(\frac{r^{n+1}\cdot R_i\cdot|\langle
u,u_i\rangle|}{R^{n+1}}\right)\cdot \frac{R\cdot F_0}{n\omega_n\cdot
r^n}\,d\sigma(u),
\end{eqnarray*} where the last inequality follows from Lemma \ref{l-bounded} and $F_0$ is the uniform bound of $\{f_{K_i}\}_{i\geq 1}$ on $\sphere$ (i.e., $F_0=\sup_{i\geq 1} \sup_{u\in \sphere} f_{K_i}(u)$). As in the proof of Proposition \ref{p3-07-22}, one gets
\begin{eqnarray*}
1\leq\lim_{i\rightarrow\infty}
\int_{S^{n-1}}\phi\left(\frac{r^{n+1}\cdot R_i\cdot|\langle
u,u_i\rangle|}{R^{n+1}}\right)\cdot \frac{R\cdot F_0}{n\omega_n\cdot
r^n}\,d\sigma(u)=0,
\end{eqnarray*} which is a contradiction. Hence there is a finite constant $R'>0$ such that $M_i\subset R'\ball$  for all (given) $M_i\in \widehat{T}_\phi (K_i, \cK_e)$, $i\geq 1$. In other words, $\{M_i\}_{i\geq 1}$ is uniformly bounded.

Let $l=\liminf_{i\rightarrow \infty} \affineg(K_i, \cK_e).$ Clearly, one
can find a subsequence $\{K_{i_k}\}_{k\geq 1}$ such that
$l=\lim_{k\rightarrow \infty} \affineg(K_{i_k}, \cK_e).$ By the Blaschke
selection theorem and Lemma \ref{l1}, there exists a subsequence of
$\{M_{i_k}\}_{k\geq 1}$ (still denoted by $\{M_{i_k}\}_{k\geq 1}$)
and a body $M\in\cK_e$, such that,  $M_{i_k}\rightarrow M$ as
$k\rightarrow \infty$ and $|M^\circ|=\omega_n$. Proposition \ref{p2}
then yields $$ \widehat{G}^{orlicz}_\phi(K_{i_k},
\cK_e)=\widehat{V}_\phi(K_{i_k}, M_{i_k})\rightarrow
\widehat{V}_\phi(K_0, M) \ \ \ \mathrm{as} \ \ \ k\rightarrow
\infty.$$ By (\ref{symmetric-definition}), one has  $$
\widehat{G}^{orlicz}_\phi(K_0, \cK_e)\leq \widehat{V}_\phi(K_0,
M)=\lim_{k\rightarrow\infty}
\widehat{G}^{orlicz}_\phi(K_{i_k}, \cK_e)=\liminf_{i\rightarrow \infty}
\widehat{G}^{orlicz}_\phi(K_i, \cK_e).
$$ On the other hand, for any given $\epsilon>0$, by
(\ref{symmetric-definition}) and  Proposition \ref{p2}, there exists
a convex body $L_{\epsilon}\in \cK_e$ such that
$|L_{\epsilon}^\circ|=\omega_n$  and \begin{eqnarray*} \affineg(K_0,
\cK_e)+\epsilon > \widehat{V}_{\phi}(K_0, L_{\epsilon}) =
\limsup_{i\rightarrow \infty} \widehat{V}_{\phi}(K_i,
L_{\epsilon})\geq  \limsup_{i\rightarrow\infty} \affineg(K_i,
\cK_e).
\end{eqnarray*}
By letting $\epsilon\rightarrow 0$, one gets $\affineg(K_0,
\cK_e)\geq \limsup_{i\rightarrow\infty} \affineg(K_i, \cK_e)$ and
the desired limit follows.  \end{proof}

 Let $K\in \cK_0$ and $\varphi\in \widehat{\Phi}_1\cup \widehat{\Psi}$. The  nonhomogeneous Orlicz $L_{\varphi}$ geominimal surface area of $K$ with respect to $\cK_e$ can be defined by
\be\nonumber {G}_\varphi^{orlicz}(K, \cK_e)=\inf\{n {V}_\varphi(K, L):\
L\in\cK_e \quad \text{with}\quad |L^\circ|=\omega_n\}.
 \ee While if $\varphi\in \widehat{\Phi}_2$, ${G}_\varphi^{orlicz}(\cdot, \cK_e)$ can be defined similarly with `` $\inf$" replaced by `` $\sup$". Analogous results  to  Proposition \ref{p3-07-22} and Theorem \ref{L-U-3} can be proved for ${G}_\varphi^{orlicz}(\cdot, \cK_e)$ if $\varphi\in\widehat{\Phi}_2$  satisfies (\ref{condition on phi}). We leave the details for readers.

\vskip 2mm \noindent {\bf Acknowledgments.} The first author is
supported by AARMS, NSERC, NSFC (No. 11501185) and the Doctor
Starting Foundation of Hubei University for Nationalities (No.
MY2014B001). The third author is supported by a NSERC
grant.

\vskip 2mm \noindent Baocheng Zhu, \ \ \ {\small \tt zhubaocheng814@163.com}\\
{ \em 1. Department of Mathematics,  
 Hubei University for Nationalities,  
 Enshi, Hubei, China 445000}\\
{  \em 2.\  Department of Mathematics and Statistics,   Memorial University of Newfoundland,
   St.\ John's, Newfoundland, Canada A1C 5S7 }

\vskip 2mm \noindent Han Hong, \ \ \ {\small \tt honghan0917@126.com}\\
{ \em Department of Mathematics and Statistics,   Memorial University of Newfoundland,
   St.\ John's, Newfoundland, Canada A1C 5S7 }

\vskip 2mm \noindent Deping Ye, \ \ \ {\small \tt deping.ye@mun.ca}\\
{ \em Department of Mathematics and Statistics,
   Memorial University of Newfoundland,
   St.\ John's, Newfoundland, Canada A1C 5S7 }

\end{document}